\providecommand{\U}[1]{\protect\rule{.1in}{.1in}}
\def\theenumi{\arabic{enumi}}
\def\theenumii{\alph{enumii}}
\def\p@enumii{\theenumi.}
\def\theenumiii{\arabic{enumiii}}
\def\p@enumiii{(\theenumi)(\theenumii)}
\def\p@enumiv{\p@enumiii.\theenumiii}
\theoremstyle{plain}
\newtheorem{proposition}[theorem]{Proposition}
\numberwithin{equation}{section}
\theoremstyle{definition}
\newtheorem{definition}[theorem]{Definition}
\newtheorem{example}[theorem]{Example}
\newtheorem{remark}[theorem]{Remark}
\newtheorem{thmab}{Theorem}
\DeclareMathOperator{\FI}{FI}
\newcommand{\Sn}{\mathfrak{S}}
\renewcommand{\R}{\mathbb{R}}
\newcommand{\RR}{\mathbb{R}}
\newcommand{\X}{\mathcal{X}}
\newcommand{\dt}{\bullet}
\newcommand{\arXiv}[1]{\href{http://arxiv.org/abs/#1}{\nolinkurl{arXiv:#1}}}
\newcommand{\arXivV}[2]{\href{http://arxiv.org/abs/#1}{\nolinkurl{arXiv:#1v#2}}}
\title{Excessive symmetry can preclude cutoff}
\author[E.~Ramos]{Eric Ramos}
\address[E. Ramos]{Bowdoin College Department of Mathematics, Searles Hall, Brunswick, ME 04011}
\email{e.ramos@bowdoin.edu}
\author[G.~White]{Graham White}
\email{grahamwhite@alumni.stanford.edu}
\thanks{The first author was supported by NSF grants DMS-1704811 and DMS-2137628.}
\begin{document}

\begin{abstract}
For each $n,r \geq 0$, let $KG(n,r)$ denote the Kneser Graph; that whose vertices are labeled by $r$-element subsets of $n$, and whose edges indicate that the corresponding subsets are disjoint. Fixing $r$ and allowing $n$ to vary, one obtains a family of nested graphs, each equipped with a natural action by a symmetric group $\Sn_n$, such that these actions are compatible and transitive. Families of graphs of this form were introduced by the authors in \cite{RW}, while a systematic study of random walks on these families were considered in \cite{RW2}. In this paper we illustrate that these random walks never exhibit the so-called product condition, and therefore also never display total variation cutoff as defined by Aldous and Diaconis \cite{AD}. In particular, we provide a large family of algebro-combinatorially motivated examples of collections of Markov chains which satisfy some well-known algebraic heuristics for cutoff, while not actually having the property.
\end{abstract}

\keywords{FI-modules, Representation Stability, Markov chains, Cutoff}

\maketitle

\section{Introduction}
In the paper \cite{RW2}, the authors considered random walks on a new kind of algebro-combinatorial objects: $\FI$-graphs. Formally speaking, an \textbf{$\FI$-graph} is a functor from the category of finite sets with injections to the category of (finite) graphs and graph homomorphisms. More concretely, one may think of an $\FI$-graph as a family of nested graphs $\{G_n\}$, each equipped with an action of the symmetric group $\Sn_n$, which is compatible with the inclusions $G_n \subseteq G_{n+1}$. Examples of these objects include the complete graphs $K_n$, the Kneser graphs $KG(n,r)$, and the Johnson graphs $J(n,r)$. They also include more exotic examples such as the graph of commuting transpositions on $\Sn_n$, as well as the graph of possible vertex colorings of a fixed graph. We will usually denote an $\FI$-graph by $G_\dt$. See Section \ref{examples} for some other examples.

In the prequel work \cite{RW2}, the authors briefly noted the fact that the family of simple random walks on $\FI$-graphs might not exhibit \textbf{cutoff}, in the sense of Aldous and Diaconis \cite{AD} (see Definition \ref{cutoffdef}). In brief, we say that a family of Markov chains $\{X_t^{(n)}\}_{n \geq 0}$ exhibits a cutoff so long as the time taken for them to move from being slightly mixed to very close to mixed is small compared to the time taken to achieve either of these things, for large enough $n$. Cutoff is seen to appear in many natural families of Markov chains, and has been a very active field of study since its inception in \cite{AD,D}.

In Diaconis's treatment \cite{D}, he notes that in many known cases where the cutoff phenomena is see to appear, there are certain algebraic restrictions on the spectrum of the chain. One such restriction, for instance, is that the second biggest eigenvalue has multiplicity that grows in $n$. Diaconis is then led to conjecture that this boundedness is a necessary condition for cutoff \cite{D}. Diaconis also notes in that work that the cutoff phenomenon seems considerably more likely in situations where the chain has an abundance of symmetry. This perspective was later reinforced by work of Lubetzky and Sly \cite{LS}, which displayed that families of Markov chains on random regular graphs exhibit cutoff.

From the perspective of $\FI$-graphs, if one were hoping to prove the appearance of the cutoff phenomenon, it would therefore seem most beneficial to limit oneself to situtations wherein symmetry is most apparent. In this work we will look at the class of \textbf{transitive} $\FI$-graphs. We say that an $\FI$-graph $G_\dt$ is transitive whenever the action of $\Sn_n$ on $G_n$ is vertex-transitive for all $n \gg 0$. All three of the examples of $\FI$-graphs given in the first paragraph are transitive. It is a fact (see Proposition \ref{eigengrow}) that the second biggest eigenvalue of a transitive $\FI$-graph has multiplicity which grows like a non-constant polynomial in $n$.  The main result of this paper is that, despite the aforementioned heuristics for cutoff, random walks on transitive $\FI$-graphs cannot display the phenomenon.

\begin{thmab}\label{mainthm}
Let $G_\dt$ be a transitive $\FI$-graph. Then the family of simple random walks on the graphs $G_n$ do not exhibit cutoff. (see Defintion \ref{cutoffdef}).
\end{thmab}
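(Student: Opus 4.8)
The plan is to show that the simple random walks on $G_n$ fail the \emph{product condition} of Aldous and Diaconis, and then invoke the standard fact that the product condition is necessary for cutoff. Concretely, writing $\gamma_n$ for the spectral gap (one minus the second-largest eigenvalue of the transition matrix, after a lazy modification to rule out sign issues) and $t_{\mathrm{mix}}^{(n)}$ for the total-variation mixing time, I must produce an absolute constant $C$ with $t_{\mathrm{mix}}^{(n)} \gamma_n \leq C$ for all $n \gg 0$; equivalently, $t_{\mathrm{rel}}^{(n)} = \gamma_n^{-1}$ and $t_{\mathrm{mix}}^{(n)}$ are of the same order. The key structural input is that $G_\dt$ is transitive, and (the finite-graph analogue of) the observation behind Proposition~\ref{eigengrow}: the eigenvalues of $G_n$, with multiplicity, come from a \emph{bounded} list of ``eigenvalue functions'' that stabilize as $n$ grows --- a consequence of the representation-stability machinery for $\FI$-graphs developed in \cite{RW,RW2}. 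In particular the diameter of $G_n$ is bounded independently of $n$, and the distinct eigenvalues of $G_n$ form a set of bounded size, so the \emph{number} of distinct eigenvalues is $O(1)$.

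The first step is to recall from \cite{RW2} (or re-derive from the $\FI$-graph structure) that for a transitive $\FI$-graph there is a constant $D$ with $\mathrm{diam}(G_n) \leq D$ for all $n$, and that consequently $t_{\mathrm{mix}}^{(n)}$ is bounded \emph{above} by an absolute constant: a vertex-transitive graph of bounded degree and bounded diameter mixes in $O(1)$ steps. (If degrees are not bounded one passes to the lazy walk and uses that on a vertex-transitive graph of diameter $\le D$ the lazy simple random walk satisfies $t_{\mathrm{mix}} \le f(D)$ for a universal function $f$; transitivity makes the stationary distribution uniform, so this is just a bounded-diameter estimate.) The second step is to bound $\gamma_n$ \emph{below} by an absolute constant: since the eigenvalues lie in a finite stable list independent of $n$, the second-largest eigenvalue is bounded away from $1$ uniformly in $n$, hence $\gamma_n \geq \gamma_0 > 0$ for some fixed $\gamma_0$ and all $n \gg 0$. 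Combining the two steps, $t_{\mathrm{mix}}^{(n)} \gamma_n \leq C \cdot 1 = C$, so the product $t_{\mathrm{mix}}^{(n)}\gamma_n$ stays bounded rather than tending to infinity.

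The third step is the logical conclusion. By the standard criterion (see \cite{AD}, or Levin--Peres), any family of reversible Markov chains exhibiting total-variation cutoff must satisfy $t_{\mathrm{mix}}^{(n)}\gamma_n \to \infty$ (equivalently $t_{\mathrm{rel}}^{(n)}/t_{\mathrm{mix}}^{(n)} \to 0$). Since we have just shown this product is bounded, the family $\{X_t^{(n)}\}$ cannot have cutoff, which is exactly Theorem~\ref{mainthm}. One cosmetic point: if the simple (non-lazy) walk is periodic or has eigenvalue $-1$ appearing with large multiplicity, ``cutoff'' for it is interpreted via the lazy version or via $d(t) = \max_x \|P^t(x,\cdot) - \pi\|_{\mathrm{TV}}$ directly; in either interpretation the bounded-diameter argument still caps $t_{\mathrm{mix}}^{(n)}$, so the conclusion is unaffected.

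\textbf{Main obstacle.}
The crux is establishing the uniform bounds on $\mathrm{diam}(G_n)$ and on $1 - \lambda_2(G_n)$ purely from transitivity of the $\FI$-graph. The diameter bound should follow from the local-to-global finiteness properties of $\FI$-graphs --- every $G_n$ is a quotient (or "span") of $G_N$ for a bounded $N$ under the $\Sn_n$-action, and transitivity forces $G_n$ to be covered by $\Sn_n$-translates of a fixed finite ball --- but making this precise is where the real work lies, and it is presumably where Proposition~\ref{eigengrow} and its supporting lemmas come in. Once the spectrum is known to be "eventually constant" as a multiset of functions of $n$, the separation of $\lambda_2$ from $1$ is immediate, and the rest is the soft product-condition argument above.
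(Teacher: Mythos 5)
Your overall strategy is the same as the paper's: show that the product condition fails, then invoke the fact that cutoff implies the product condition. However, the two specific claims you use to carry this out are both false, and in fact the paper's own examples (the last section) refute them.

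Your first claim is that $t_{\mathrm{mix}}^{(n)}$ is bounded above by an absolute constant, deduced from the (true) fact that the diameter of $G_n$ is eventually constant. This deduction is invalid: bounded diameter does not imply bounded mixing time once the degree is allowed to grow with $n$, and for $\FI$-graphs the degree does grow polynomially. The paper itself exhibits simple random walks on transitive $\FI$-graphs with mixing time $\Theta(n)$ (see Example~\ref{ex:mult} and the surrounding discussion of multiple edge-orbits: if a triple $\{a,b,c\}$ is adjacent to each $\{x,b,c\}$ and each $\{a,y,z\}$, the diameter is $2$ but the rare first-coordinate move takes $\Theta(n)$ steps to be chosen). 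Your second claim, that the spectral gap $\gamma_n$ is bounded below by a constant ``since the eigenvalues lie in a finite stable list independent of $n$,'' is also wrong: Theorem~\ref{eigenvaluestab} says the eigenvalues are algebraic \emph{functions} of $n$, not constants, and nothing prevents the second eigenvalue from being, say, $1-1/n$. In the $\Theta(n)$-mixing examples above, the gap must in fact tend to $0$ (else Theorem~\ref{maininequal} would be violated).

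The paper's actual argument is more delicate precisely because $t_{\mathrm{mix}}^{(n)}$ and $t_{\mathrm{rel}}^{(n)}$ can both grow. It passes to the orbit walk (Theorem~\ref{TransitiveMixing}), shows it is a rational transition (Proposition~\ref{isRatTran}), and then uses hitting-time comparison (Theorem~\ref{hitismix}) to conclude $t_{\mathrm{mix}}^{(n)} = \Theta(f(n))$ for a rational $f$ (Theorem~\ref{RatMixing}); combined with Theorem~\ref{eigenvaluestab}, the ratio $t_{\mathrm{mix}}^{(n)}/t_{\mathrm{rel}}^{(n)}$ is $\Theta$ of an algebraic function. The crucial extra input is Theorem~\ref{maininequal}, which bounds this ratio above by $\log(4|V(G_n)|) = O(\log n)$. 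An algebraic function of $n$ that is $O(\log n)$ cannot tend to $\infty$ (the paper's final paragraph verifies this directly from the defining polynomial relation), so the product condition fails. Your writeup does not contain this log-bound idea or the algebraicity of the mixing time, and without them there is no way to close the argument; the ``main obstacle'' you flag at the end is real, but it is resolved by a genuinely different mechanism than the bounded-diameter/bounded-gap picture you propose.
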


In his recent work \cite{L}, Lacoin constructed infinite families of Markov chains that do not have cutoff, despite satisfying the strong heuristic of the \textbf{product condition} (see Definition \ref{prodco}). In this paper, we will show that our families of Markov chains can never satisfy the product condition. Therefore, one can think of this work as being parallel to Lacoin's work, though our examples violate different heuristics.

In summary, the purpose of this paper is to display the following: There exist many algebro-combinatorially defined collections of graphs $\{G_n\}_{n \geq 0}$ such that the family of simple random walks on these graphs:
\begin{enumerate}
\item does not exhibit cutoff, or even the product condition;
\item is transitive, in that for each $n$ there is a vertex-transitive action of $\Sn_n$ on $G_n$ which preserves the probability measure;
\item satisfies the Diaconis eigenvalue heuristic \cite{D} for cutoff, in that the multiplicity of the second biggest eigenvalue of the transition matrix for the Markov chain is growing to $\infty$ with $n$.
\end{enumerate}

We will see in the proof of the main theorem that there is a very strong sense in which transitive $\FI$-graphs are \emph{too} symmetric to exhibit cutoff. This will be made precise in what follows.

\section*{Acknowledgments}
The authors would like to send their sincere thanks to David Levin and Bal\'azs Gerencs\'er for helpful discussions. The first author was supported NSF grants DMS-1704811 and DMS-2137628.

\section{Background}

In this section, we cover the majority of the background required to understand the results of this paper. Much of the exposition here is based on the prequel paper \cite{RW2}.

\subsection{Mixing times}
\label{sec:markovbackground}

We begin by briefly reviewing the theory of mixing times for Markov chains. Following this, we will spend some time recalling the notion of cutoff for families of Markov chains. All of what follows can be found in any standard text on the subject, such as \cite{LPW}. 

\begin{definition}\label{markovdef}
Let $\mathcal{X}$ be a finite set. Then a \textbf{Markov chain} on $\mathcal{X}$ is a family of random variables $\{X_t\}_{t = 0}^\infty$ such that for all $t \geq 0$, and all $(t+1)$--tuples $(x_0,\ldots,x_t) \in \mathcal{X}^{t+1}$,
\begin{enumerate}
\item $\mathbb{P}(X_t = x_t \mid X_{t-1} = x_{t-1}, \ldots , X_{0} = x_0) = \mathbb{P}(X_t = x_t \mid X_{t-1} = x_{t-1})$, and
\item $\mathbb{P}(X_t = x_t \mid X_{t-1} = x_{t-1}) = \mathbb{P}(X_{t-1} = x_t \mid X_{t-2} = x_{t-1}).$
\end{enumerate}

The information necessary to define a Markov chain is the state space $\X$ and the collection of transition probabilities --- the probabilities of moving from any state to any other. These probabilities are collected in the \textbf{transition matrix}, whose $(i,j)$--entry is the probability of moving from state $i$ to state $j$ in a single step. If $a,b \in \X$ are such that $P(a,b) > 0$, then we say that $b$ is a \textbf{neighbor} of $a$.

We say that a Markov chain $\{X_t\}_t$ on $\mathcal{X}$ is \textbf{connected} or \textbf{irreducible} if for any pair of states $x,y \in \mathcal{X}$ there is some $t > 0$ such that
\[
P^t(x,y) > 0
\]
The matrix $P$ is independent of the choice of initial distribution $\mathbb{P}(x) := \mathbb{P}(X_0 = x)$. We will usually interpret a choice of initial distribution as a row vector in $\RR \mathcal{X}$ whose coordinates sum to 1. A \textbf{stationary distribution of a Markov chain} is a choice of initial distribution $\pi$ having the property that $\pi \cdot P = \pi$.\\

Finally, we say that a Markov chain is \textbf{transitive} if there is a transitive action by some group $G$ on the state space $\mathcal{X}$, such that for all $g$ in $G$, and all $x,y \in \mathcal{X}$, $P(x,y) = P(gx,gy)$.
\end{definition}

\begin{theorem}[Proposition 1.14 and Corollary 1.17 of \cite{LPW}]
Let $(X_t,P)$ be a connected Markov chain on a state space $\mathcal{X}$. Then there exists a unique distribution $\pi$ such that $\pi \cdot P = \pi$.
\end{theorem}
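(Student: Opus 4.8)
The plan is to prove the existence and uniqueness of a stationary distribution for a connected (irreducible) finite Markov chain, which is the content of the cited Proposition 1.14 and Corollary 1.17 of \cite{LPW}. I will treat existence and uniqueness separately, since they rest on somewhat different ideas.

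For \textbf{existence}, I would argue via a compactness/fixed-point argument. The set $\Delta$ of probability row vectors on $\mathcal{X}$ is a nonempty, compact, convex subset of $\RR\mathcal{X}$, and right multiplication by $P$ is a continuous affine map $\Delta \to \Delta$ (it preserves nonnegativity and the property that coordinates sum to $1$, because the rows of $P$ sum to $1$). By the Brouwer fixed-point theorem there is a fixed point $\pi = \pi \cdot P$. Alternatively, and more in the spirit of \cite{LPW}, one can give an explicit construction: fix a state $z$, let $T_z$ be the first return time to $z$, and define $\pi(y)$ to be the expected number of visits to $y$ before returning to $z$, started from $z$, normalized by $\mathbb{E}_z[T_z]$ (which is finite by irreducibility on a finite state space). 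A short computation using the strong Markov property shows this vector is $P$-invariant and sums to $1$. I would likely present the fixed-point version for brevity and remark that an explicit construction is available.

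For \textbf{uniqueness}, the key input is that irreducibility forces the stationary distribution to be strictly positive, and then that the $1$-eigenspace of $P$ is one-dimensional. I would proceed as follows. First, if $\pi$ is stationary and $\pi(x) > 0$ for some $x$, then for any $y$ there is $t$ with $P^t(x,y) > 0$, and $\pi(y) = (\pi P^t)(y) \geq \pi(x) P^t(x,y) > 0$; so every stationary distribution is everywhere positive. Next, suppose $\pi_1, \pi_2$ are both stationary. Consider $\mu = \pi_1 - c\,\pi_2$ where $c = \min_y \pi_1(y)/\pi_2(y)$, chosen so that $\mu$ has a zero coordinate but $\mu \geq 0$ coordinatewise. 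Then $\mu$ is a nonnegative $P$-invariant vector with a zero entry; if $\mu \neq 0$ then $\mu/(\sum_y \mu(y))$ would be a stationary distribution that is not everywhere positive, contradicting the previous step. Hence $\mu = 0$, i.e.\ $\pi_1 = c\,\pi_2$, and comparing total masses gives $c = 1$, so $\pi_1 = \pi_2$.

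The main obstacle is really just bookkeeping rather than a deep difficulty: one must be careful that the excerpt's definition of ``connected/irreducible'' (for every $x,y$ there exists $t>0$ with $P^t(x,y)>0$) is exactly what powers the positivity argument, and that finiteness of $\mathcal{X}$ is used both to get compactness of $\Delta$ and to ensure expected return times are finite. No aperiodicity is needed for this statement. I would keep the write-up short, cite \cite{LPW} for the standard estimates, and emphasize the two conceptual points: a fixed point exists by compactness/convexity, and uniqueness follows because irreducibility pins any invariant distribution to be strictly positive, leaving no room for a second independent one.
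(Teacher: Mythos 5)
Your proposal is correct. Note that the paper itself offers no proof of this statement---it is quoted verbatim from \cite{LPW} (Proposition 1.14 and Corollary 1.17), so there is no internal argument to compare against; the comparison can only be with the standard reference. Your existence argument (Brouwer on the simplex of probability row vectors, or equivalently the explicit return-time construction $\pi(y) \propto \mathbb{E}_z[\text{visits to } y \text{ before returning to } z]$) matches what \cite{LPW} does in Proposition 1.14, which uses the explicit construction. Your uniqueness argument takes a genuinely different route from Corollary 1.17: \cite{LPW} first proves that harmonic functions (column vectors $h$ with $Ph = h$) are constant on an irreducible chain, then invokes rank--nullity to conclude the left $1$-eigenspace is one-dimensional; you instead argue directly on the distributions, using strict positivity of any stationary distribution together with the minimum-ratio trick $c = \min_y \pi_1(y)/\pi_2(y)$ to force $\pi_1 - c\,\pi_2 = 0$. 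Both are standard and correct; yours avoids the detour through harmonic functions at the cost of needing the positivity lemma explicitly, and it has the small advantage of never leaving the cone of nonnegative vectors. Your bookkeeping remarks are also apt: finiteness of $\mathcal{X}$ and the quantified form of irreducibility are exactly the hypotheses used, and aperiodicity plays no role here.
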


\begin{remark}
It is easily verifiable that if $X_t$ is a transitive Markov chain with a unique stationary distribution, then that stationary distribution is uniform.
\end{remark}

Ultimately, the fundamental theorem of mixing times of Markov chains is that, with certain mild conditions, they eventually approach their stationary distribution. In order to talk about Markov chains approaching their stationary distributions, we will need to be able to measure the distance between distributions. For the work in this paper, we will follow the convention of using what is essentially the $L_1$ distance.

\begin{definition}
If $\mu$ and $\nu$ are two probability distributions on a set $\mathcal{X}$, then the \textbf{total variation distance} between $\mu$ and $\nu$ is the maximum value of $\mu(A) - \nu(A)$ over all events $A \subseteq \mathcal{X}$. Equivalently (for the finite chains we will consider), it is equal to the sum $$\sum_{x \in \mathcal{X}} \frac{1}{2}\left|\mu(x) - \nu(x)\right|.$$  
\end{definition}

\begin{Theorem}[Theorem 4.9 of \cite{LPW}]
Let $P$ be a Markov chain which is irreducible and aperiodic, with stationary distribution $\pi$. Then there exist constants $\al \in (0,1)$ and $C > 0$ so that for any starting state and any time $t$, the distance of the distribution after $t$ steps of $P$ from the stationary distribution $\pi$ is at most $C\al^t$. 
\end{Theorem}

This theorem requires that the Markov chain in question be aperiodic --- that it is not the case that all paths from a state to itself have length a multiple of any non-trivial period. 

\begin{definition} \label{mixdef}
Let $P$ be an irreducible and aperiodic Markov chain on the state space $\mathcal{X}$, and $\eps$ be any positive constant. The \textbf{mixing time} $\tme$ is the smallest time so that for any starting state $x \in \mathcal{X}$, the distribution after $\tme$ steps is within $\eps$ of the stationary distribution $\pi$. We also write $\tm := t_{\text{mix}}(1/4)$.
\end{definition}

\begin{Remark}
\label{rem:mixingeps}
Given a family of Markov chains indexed by $n$, we will sometimes want to say things like `These chains mix in a single step', or `These chains mix in five steps'. Statements like these should be understood to mean that for any $\eps$, there exists $N$ so that for all $n > N$, the claimed bound is true of $\tme$.  
\end{Remark}

In this paper, our Markov chains will take the form of random walks on (finite) graphs. For us, graphs will always be connected.

\begin{definition}\label{reverse}
If $(X_t,P)$ is a connected Markov chain on a state space $\mathcal{X}$ with stationary distribution $\pi$, then we say $(X_t,P)$ is \textbf{reversible} if for all $x,y \in \mathcal{X}$
\[
\pi(x)P(x,y) = \pi(y)P(y,x).
\]
\end{definition}

\emph{throughout this paper, we will assume that all Markov chains are irreducible, aperiodic, and reversible.} 

\subsection{Cutoff}

In this section, we outline the notion of cutoff first introduced by Aldous and Diaconis \cite{AD}. We also take the time to discus a variety of heuristics for when families are expected to exhibit a cutoff. This will be relevant later (Section \ref{examples}) when we construct examples which violate these heuristics.

\begin{definition}\label{cutoffdef}
Let $\{X^{(n)}_t\}_{n \geq 0}$ be a family of irreducible, aperiodic Markov chains. For each $n \geq 0$ and $\epsilon \in (0,1)$ we write $\tme^{(n)}$ for the mixing time of $X^{(n)}_t$. We say that the family \textbf{mixes in eventually constant time} if for all $\epsilon \in (0,1)$, $\tme^{(n)}$ is $O(1)$. We say that $\{X^{(n)}_t\}_{n \geq 0}$ \textbf{exhibits cutoff} if it does not mix in eventually constant time, and for all $\epsilon \in (0,1)$,
\begin{eqnarray}
\lim_{n \to \infty} \tme^{(n)}/t_{\text{mix}}(1-\epsilon)^{(n)} = 1. \label{cutoff}
\end{eqnarray}
\end{definition}

Intuitively, a family of Markov chains exhibits cutoff when the time between $\tm(1-\eps)$ and $\tm(\eps)$ is small compared to both of these quantities, for large enough $n$. When graphing total variation distance as a function of time, this describes a sudden drop from $1-\epsilon$ to $\epsilon$. Note that the usual definition of cutoff does not exclude chains with constant mixing time. For our purposes, chains with constant mixing time are not very interesting --- for instance, random walks on larger and larger complete graphs mix in a single step, so we exclude them and prove results about cutoff in chains not of this kind. See Remark \ref{notconst} for an instance where this is necessary.

Cutoff was introduced by Aldous and Diaconis in \cite{AD}. They were later expanded upon in an article of Diaconis \cite{D}. Since these original works, there has been an explosion of activity on the subject, propelled in part by the following contrast: cutoff is a natural condition that seems to hold for many classical examples of Markov chains (see \cite[Chapter 18]{LPW}, and the references therein), and yet it is also exceptionally hard to prove in almost every case of interest. While there are some general criteria for proving cutoff \cite{BHP}, the field has largely relied on more ad-hoc methods.

That being said, there are some heuristics which are generally believed to be good indicators of cutoff, though all are known to not necessarily imply cutoff. Interestingly, two of the most frequently used heuristics involve algebraic properties of the family. 

\begin{definition}
Recall that for a Markov chain $X_t$, one has an associated transition matrix $P$. Assuming that $X_t$ is irreducible, it is a fact that the largest eigenvalue of $P$ is 1. We write $\lambda$ for the second largest eigenvalue of $P$ in absolute value. The \textbf{relaxation time}, $t_{\text{rel}}$ of the Markov chain is the quotient
\[
t_{\text{rel}} := \frac{1}{1-\lambda}
\]
\end{definition}

\begin{proposition}[\cite{LPW}, Proposition 18.4]\label{prodco}
Let $\{X^{(n)}_t\}_{n \geq 0}$ be a family of aperiodic, connected, reversible Markov chains. Writing $\tm^{(n)}$ and $t_{\text{rel}}^{(n)}$ for the mixing and relaxation times of $X^{(n)}_t$, respectively, then
\begin{eqnarray}
t_{\text{rel}}^{(n)} = o(\tm^{(n)})\label{preco}
\end{eqnarray}
whenever $\{X^{(n)}_t\}_{n \geq 0}$ exhibits cutoff.
\end{proposition}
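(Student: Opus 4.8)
The plan is to prove the contrapositive of a slightly sharper statement: if $t_{\text{rel}}^{(n)}$ is \emph{not} $o(\tm^{(n)})$, then the family cannot exhibit cutoff. The key tool is the standard lower bound on mixing time in terms of the relaxation time, namely that for a reversible, irreducible, aperiodic chain one has $\tme \geq (t_{\text{rel}} - 1)\log\!\big(\tfrac{1}{2\eps}\big)$ for every $\eps \in (0,1)$ (this is the companion to Proposition 18.4 and is itself proved by testing the total variation distance against the eigenfunction realizing the spectral gap). First I would record this inequality, applied with two different values of $\eps$: one small, say $\eps$, and one close to $1$, say $1-\eps$. Together with the trivial upper bound $\tm(1-\eps)^{(n)} \le \tme^{(n)}$, this sandwiches the ratio $\tme^{(n)}/t_{\text{mix}}(1-\eps)^{(n)}$ in a way that lets the relaxation time control how far the ratio can be from $1$.

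The core of the argument is then the following dichotomy. Suppose the family exhibits cutoff, so it does not mix in eventually constant time and $\tme^{(n)}/t_{\text{mix}}(1-\eps)^{(n)} \to 1$ for all $\eps$. Apply the spectral lower bound at level $1-\eps$:
\[
t_{\text{mix}}(1-\eps)^{(n)} \;\geq\; \big(t_{\text{rel}}^{(n)}-1\big)\log\!\Big(\tfrac{1}{2(1-\eps)}\Big).
\]
For $\eps$ close to $1$ the logarithm is positive, so $t_{\text{mix}}(1-\eps)^{(n)}$ is bounded below by a positive constant times $t_{\text{rel}}^{(n)}-1$. Meanwhile, one always has the upper bound $\tme^{(n)} \le t_{\text{rel}}^{(n)}\log\!\big(\tfrac{1}{\eps\,\pi_{\min}^{(n)}}\big)$; but rather than chase $\pi_{\min}$, the cleaner route is: since the family does not mix in constant time, $t_{\text{mix}}(1-\eps)^{(n)} \to \infty$, and cutoff forces $\tme^{(n)} - t_{\text{mix}}(1-\eps)^{(n)} = o\big(t_{\text{mix}}(1-\eps)^{(n)}\big)$. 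Now combine with the lower bounds at the two $\eps$-levels to see that $t_{\text{rel}}^{(n)}$ must be $o\big(t_{\text{mix}}(1-\eps)^{(n)}\big) = o(\tm^{(n)})$: concretely, subtracting the level-$\eps$ and level-$(1-\eps)$ spectral inequalities shows $\tme^{(n)} - t_{\text{mix}}(1-\eps)^{(n)} \ge (t_{\text{rel}}^{(n)}-1)\log\!\big(\tfrac{1-\eps}{\eps}\big)$, and the left side is $o(\tm^{(n)})$ while the logarithm on the right is a fixed positive constant for $\eps<1/2$; hence $t_{\text{rel}}^{(n)} - 1 = o(\tm^{(n)})$, i.e. $t_{\text{rel}}^{(n)} = o(\tm^{(n)})$.

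The step I expect to require the most care is not a deep one but a bookkeeping one: making sure the spectral lower bound $\tme \geq (t_{\text{rel}}-1)\log\frac{1}{2\eps}$ is invoked correctly for $\eps$ in the regime where the logarithm has the right sign, and being careful that $\lambda$ here is the second-largest eigenvalue \emph{in absolute value} so that the eigenfunction bound genuinely applies to the chain as stated (reversibility and aperiodicity, both standing assumptions in the paper, are exactly what guarantee a real spectrum with $|\lambda|<1$ and the requisite eigenfunction). Once that inequality is in hand, the rest is the elementary manipulation of ratios sketched above. I would present it as: assume cutoff, derive $\tme^{(n)} - t_{\text{mix}}(1-\eps)^{(n)} \ge c_\eps\,(t_{\text{rel}}^{(n)}-1)$ with $c_\eps>0$ for small $\eps$, conclude $t_{\text{rel}}^{(n)}-1 = o(\tm^{(n)})$, which is precisely \eqref{preco}.
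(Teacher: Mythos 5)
This proposition is quoted from \cite{LPW} (Proposition 18.4) and the paper offers no proof of its own, so I am comparing your attempt against the standard argument. Your setup is right --- the key tool is indeed the spectral lower bound $\tme \geq (t_{\text{rel}}-1)\log(\tfrac{1}{2\eps})$ of Theorem \ref{maininequal} --- but the central step of your argument is invalid. You obtain the inequality $\tme^{(n)} - t_{\text{mix}}(1-\eps)^{(n)} \geq (t_{\text{rel}}^{(n)}-1)\log(\tfrac{1-\eps}{\eps})$ by ``subtracting'' two \emph{lower} bounds. From $A \geq a$ and $B \geq b$ one cannot conclude $A - B \geq a - b$; for that you would need an \emph{upper} bound on $B = t_{\text{mix}}(1-\eps)^{(n)}$ in terms of $t_{\text{rel}}$, which the spectral bound does not supply. (Note also that for $\eps < 1/2$, which is the regime where $\log(\tfrac{1-\eps}{\eps}) > 0$, the level-$(1-\eps)$ bound has a negative right-hand side and is vacuous.) The claimed difference inequality is moreover false as a general statement about reversible chains: for the simple random walk on $K_n$ one has $\tme^{(n)} = t_{\text{mix}}(1-\eps)^{(n)} = 1$ for large $n$, so the left side is $0$, while $t_{\text{rel}}^{(n)} - 1 = \tfrac{1}{n-2} > 0$ makes the right side strictly positive.

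The repair is the standard one and is simpler than what you attempted: fix $\eps \in (0,1/2)$ and use only the single inequality $\tme^{(n)} \geq (t_{\text{rel}}^{(n)}-1)\log(\tfrac{1}{2\eps})$. Dividing by $\tm^{(n)}$ and using cutoff (which gives $\tme^{(n)}/\tm^{(n)} \to 1$, since $\tm(\eps') \geq \tm(1/4) \geq \tm(1-\eps')$ for $\eps' < 1/4$ and the outer ratio tends to $1$), you get
\[
\limsup_{n \to \infty} \frac{t_{\text{rel}}^{(n)}-1}{\tm^{(n)}} \;\leq\; \frac{1}{\log(\tfrac{1}{2\eps})}.
\]
Since the left side does not depend on $\eps$, letting $\eps \downarrow 0$ forces it to be $0$, so $t_{\text{rel}}^{(n)} - 1 = o(\tm^{(n)})$. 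Finally, because the family does not mix in eventually constant time, $\tm^{(n)} \to \infty$, so the constant $1$ is also $o(\tm^{(n)})$ and hence $t_{\text{rel}}^{(n)} = o(\tm^{(n)})$. The point you were missing is that the ``little-$o$'' comes from the quantifier over $\eps$ (taking $\eps \to 0$ \emph{after} the limit in $n$), not from comparing two mixing levels against each other.
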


\begin{remark}\label{notconst}
Note that this proposition is dependent on our assumption that the family eventually mixes in non-constant time. Indeed, consider the simple random walk on the complete graph $K_n$. In this case, $\tme^{(n)} = 1$ for all $n \gg 0$ and all $\epsilon \in (0,1)$. In particular, this family satisfies the required limit (\ref{cutoff}). On the other hand, one easily computes that $t_{\text{rel}}^{(n)} = \frac{n-1}{n-2} \neq o(1)$.
\end{remark}

The condition (\ref{preco}) is significant enough, that we give it a name.

\begin{definition}
We say that a family of Markov chains $\{X^{(n)}_t\}_{n \geq 0}$ satisfies the \textbf{product condition}, if (\ref{preco}) holds.
\end{definition}

The product condition is generally seen as a strong indicator that the family in question exhibits cutoff. For instance, it is known that these conditions are equivalent for random walks on weighted trees \cite{BHP}. Also, in their seminal work, Basu, Hermon, and Peres examine a hitting-time condition that, when paired with the product condition, is equivalent to cutoff \cite{BHP}. That being said, however, the product condition is not equivalent to cutoff (see the examples in \cite[Chapter 18]{LPW}, due to Aldous and Pak). The main result of this work will show that random walks on so-called transitive $\FI$-graphs (see Section \ref{FIsection}) never satisfy the product condition. One of the main tools we use to prove this is the following well known pair of bounds (see \cite[Theorems 12.4 and 12.5]{LPW})

\begin{theorem}\label{maininequal}
Let $P$ be the transition matrix of a reversible, irreducible Markov chain with state space $\X$ and stationary distribution $\pi$. Writing $\pi_{\min} = \min_{x \in \X} \pi(x)$, we have
\[
(t_{\text{rel}}-1)\log(\frac{1}{2\epsilon}) \leq \tme \leq t_{\text{rel}}\log(\frac{1}{\epsilon\pi_{\min}})
\]
\end{theorem}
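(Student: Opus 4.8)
The plan is to establish both inequalities in Theorem~\ref{maininequal} by relating the total variation distance after $t$ steps to the spectral data of $P$, using the reversibility hypothesis. Since $P$ is reversible with respect to $\pi$, it is self-adjoint on $\ell^2(\pi)$, so we may fix an orthonormal (in the $\pi$-inner product) eigenbasis $f_1 = \mathbbm{1}, f_2, \ldots, f_{|\X|}$ with real eigenvalues $1 = \lambda_1 > \lambda_2 \geq \cdots \geq \lambda_{|\X|} \geq -1$, and we write $\lambda_\star = \max\{|\lambda_2|, |\lambda_{|\X|}|\}$ so that $t_{\text{rel}} = (1-\lambda_\star)^{-1}$. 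The standard identity $\frac{P^t(x,y)}{\pi(y)} = \sum_{j=1}^{|\X|} \lambda_j^t f_j(x) f_j(y)$ is the common engine for both bounds.

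For the \textbf{upper bound}, I would use the $L^2$-bound on total variation: by Cauchy--Schwarz, $4\,\|P^t(x,\cdot) - \pi\|_{TV}^2 \leq \sum_y \pi(y)\bigl(\frac{P^t(x,y)}{\pi(y)} - 1\bigr)^2 = \sum_{j \geq 2} \lambda_j^{2t} f_j(x)^2$. Bounding $|\lambda_j| \leq \lambda_\star$ and using $\sum_{j\geq 2} f_j(x)^2 = \frac{1}{\pi(x)} - 1 \leq \frac{1}{\pi_{\min}}$ gives $4\,\|P^t(x,\cdot)-\pi\|_{TV}^2 \leq \frac{\lambda_\star^{2t}}{\pi_{\min}}$. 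Taking $t = t_{\text{rel}}\log\frac{1}{\epsilon \pi_{\min}}$ and using $\lambda_\star^t = (1 - (1-\lambda_\star))^t \leq e^{-t(1-\lambda_\star)} = e^{-t/t_{\text{rel}}}$ makes the right-hand side at most $\frac{\epsilon^2 \pi_{\min}}{\pi_{\min}} = \epsilon^2$, so the TV distance is at most $\epsilon/2 \leq \epsilon$, which yields $\tme \leq t_{\text{rel}}\log\frac{1}{\epsilon\pi_{\min}}$ after rounding $t$ up to an integer (harmless in the regime of interest).

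For the \textbf{lower bound}, I would test the chain against the eigenfunction $f$ achieving $\lambda_\star$, i.e. $Pf = \lambda_\star f$ or $Pf = -\lambda_\star f$. Since $f \perp \mathbbm{1}$ we have $\mathbb{E}_\pi f = 0$, while $\mathbb{E}_{P^t(x,\cdot)} f = (P^t f)(x) = \lambda_\star^t f(x)$ (or $(-\lambda_\star)^t f(x)$). Choosing $x$ with $|f(x)| = \|f\|_\infty$ and applying the characterization of TV distance via test functions bounded by $\|f\|_\infty$, one gets $\|P^t(x,\cdot) - \pi\|_{TV} \geq \frac{|\lambda_\star|^t |f(x)|}{2\|f\|_\infty} = \frac{\lambda_\star^t}{2}$ (for even $t$, to handle the sign, or one argues directly for all $t$ using $|\lambda_\star^t|$). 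Hence if $t < (t_{\text{rel}}-1)\log\frac{1}{2\epsilon}$ then, using $\lambda_\star^t \geq (1-\frac{1}{t_{\text{rel}}})^t$ and the elementary inequality $-\log(1-s) \leq \frac{s}{1-s}$ with $s = 1/t_{\text{rel}}$, we get $\lambda_\star^t > 2\epsilon$, so the distance exceeds $\epsilon$ at time $t$, forcing $\tme > t$; taking the supremum over such $t$ gives $\tme \geq (t_{\text{rel}}-1)\log\frac{1}{2\epsilon}$.

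The main obstacle is bookkeeping rather than conceptual: controlling signs and parity when $\lambda_\star = |\lambda_{|\X|}|$ arises from a negative eigenvalue (one restricts to even $t$, or notes that the relevant inequalities only need to hold along a subsequence to pin down the mixing time up to the stated form), and carefully matching the elementary logarithmic estimates $\log\frac{1}{\lambda_\star} \geq 1 - \lambda_\star = \frac{1}{t_{\text{rel}}}$ (used for the upper bound) and $\log\frac{1}{\lambda_\star} \leq \frac{1-\lambda_\star}{\lambda_\star} = \frac{1}{t_{\text{rel}}-1}$ (used for the lower bound) so that the constants come out exactly as $t_{\text{rel}}$ and $t_{\text{rel}}-1$. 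Since this is a cited result (\cite[Theorems 12.4 and 12.5]{LPW}), I would present the argument compactly, referencing the spectral decomposition and the two $L^2$/test-function estimates, and leave the routine real-analysis inequalities to the reader.
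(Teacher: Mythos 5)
This statement is quoted from the literature (Theorems 12.4 and 12.5 of \cite{LPW}) and the paper gives no proof of its own, so there is nothing to compare against except the cited source; your argument is precisely the standard spectral proof from that reference (the $\ell^2(\pi)$ eigenbasis, the Cauchy--Schwarz/$L^2$ bound for the upper estimate, and testing against an extremal eigenfunction for the lower estimate), and it is correct. The only blemish is cosmetic: at time $t=t_{\text{rel}}\log\frac{1}{\epsilon\pi_{\min}}$ one gets $\lambda_\star^{2t}\le(\epsilon\pi_{\min})^2$, so the right-hand side is $\epsilon^2\pi_{\min}\le\epsilon^2$ rather than $\epsilon^2\pi_{\min}/\pi_{\min}$, which only strengthens your conclusion.
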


A second heuristic is due to Diaconis \cite{D}, and considers the multiplicity of the eigenvalue $\lambda$. Diaconis notes that if $\{X^{(n)}_t\}_{n \geq 0}$ is a family of irreducible, aperiodic Markov chains, then cutoff seems to be caused by an abundance in the multiplicity of the second largest eigenvalue $\lambda(n)$. Namely, whenever the function
\[
n \mapsto \text{multiplicity of }\lambda(n)
\]
goes to infinity with $n$, one should expect that the corresponding family of Markov chains exhibits cutoff.

In the present work, we will consider random walks in certain families of highly symmetric graphs. Our main result will show that these walks never exhibit the product condition. On the other hand, it will be shown that these walks always do satisfy the multiplicity heuristic of Diaconis, making it particularly interesting that cutoff is not present.

\subsection{$\FI$-sets and relations}\label{FIsection}

In this section we review the theory of FI-sets and relations first explored by the authors and Speyer in \cite{RSW}. This theory was heavily inspired by, and ultimately rests on the shoulders of, the theory of representation stability \cite{CEF}.

\begin{definition}\label{fiset}
We write $\FI$ to denote the category whose objects are the sets $[n] = \{1,\ldots,n\}$, and whose morphisms are injective maps of sets. An \textbf{$\FI$--set} is a functor $Z_\dt$ from $\FI$ to the category of finite sets. If $Z_\dt$ is a $\FI$--set, and $n$ is a non-negative integer, we write $Z_n$ for its evaluation at $[n]$. If $f:[n] \hookrightarrow [m]$ is an injection of sets, then we write $Z(f)$ for the map induced by $Z_\dt$.

An \textbf{$\FI$-subset}, or just a \textbf{subset}, of an $\FI$-set $Z_\dt$ is an $\FI$-set $Y_\dt$ for which there exists a natural transformation $Y_\dt \rightarrow Z_\dt$ such that $Y_n \hookrightarrow Z_n$ is an injection for all $n \geq 0$.
\end{definition}

While the above definition might appear somewhat abstract, one thing we hope to impress upon the reader is that one can think about these objects in quite concrete terms. To see this, first observe that for each $n$, $Z_n$ carries the natural structure of an $\Sn_n$-set, induced from the endomorphisms of $\FI$. With this in mind, one may therefore think of an $\FI$-set $Z_\dt$ as a sequence of $\Sn_n$-sets $Z_n$, which are compatible with one another according to the actions of the morphisms of $\FI$.

As one might expect, it is in the best interest of the theory to restrict our attention to a particular class of ``well-behaved'' $\FI$-sets. To this end we have the following definition.\\

\begin{definition}\label{fingen}
An $\FI$--set $Z_\dt$ is said to be \textbf{finitely generated in degree $\leq d$}, if for all $n \geq d$, one has
\[
Z_{n+1} = \bigcup_{f} Z(f)(Z_n)
\]
where the union is over all injections $f:[n] \hookrightarrow [n+1]$.
\end{definition}

The proof of the following theorem can be found in \cite{RSW}.

\begin{theorem}[\cite{RSW}, Theorem A]\label{mainstructurethm}
Let $Z_\dt$ denote an $\FI$-set which is finitely generated in degree $\leq d$. Then there exists a finite collection of integers $m_i \leq d$, and subgroups $H_i \subseteq \Sn_{m_i}$, such that, for $n$ sufficiently large, we have an isomorphism 
\[
Z_n \cong \bigsqcup_i \Sn_n / (H_i \times \Sn_{n-m_i})
\]
as sets with an action of $\Sn_n$.
\end{theorem}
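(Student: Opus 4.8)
The plan is to prove the statement by a direct analysis of $\Sn_n$-orbit decompositions. Since $Z_\dt$ is a functor to \emph{finite} sets and is finitely generated in degree $\le d$, the set $Z_d$ is finite and hence splits into finitely many $\Sn_d$-orbits; fix orbit representatives $x_1,\dots,x_k\in Z_d$. For $n\ge d$ let $\iota_{d,n}\colon[d]\hookrightarrow[n]$ be the standard inclusion and put $y_j^{(n)}:=Z(\iota_{d,n})(x_j)\in Z_n$. The first (routine) point is that finite generation forces $Z_n=\bigcup_{j=1}^k\Sn_n\cdot y_j^{(n)}$ for every $n\ge d$: any injection $[m]\hookrightarrow[m+1]$ is the standard inclusion post-composed with a permutation of $[m+1]$, so $\bigcup_f Z(f)(Z_m)=\Sn_{m+1}\cdot Z(\iota_{m,m+1})(Z_m)$, and one iterates this starting from the orbit decomposition of $Z_d$. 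Thus $Z_n$ is covered by at most $k$ orbits, and the whole problem reduces to two things: (a) identifying $\operatorname{Stab}_{\Sn_n}(y_j^{(n)})$, and (b) showing that these stabilizers and the pattern of coincidences among the orbits $\Sn_n\cdot y_j^{(n)}$ become independent of $n$ for $n$ large.

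All the compatibility statements needed for (b) follow from functoriality together with the identity $\sigma'\circ\iota_{n,n+1}=\iota_{n,n+1}\circ\sigma$ for $\sigma\in\Sn_n$ with extension $\sigma'\in\Sn_{n+1}$. First, any $\sigma\in\Sn_n$ fixing $[d]$ pointwise satisfies $\sigma\circ\iota_{d,n}=\iota_{d,n}$ and hence fixes every $y_j^{(n)}$, so $\operatorname{Stab}_{\Sn_n}(y_j^{(n)})\supseteq\Sn_{\{d+1,\dots,n\}}$. Second, applying $Z(\iota_{n,n+1})$ shows $y_j^{(n+1)}=Z(\iota_{n,n+1})(y_j^{(n)})$, that $\operatorname{Stab}_{\Sn_n}(y_j^{(n)})$ includes into $\operatorname{Stab}_{\Sn_{n+1}}(y_j^{(n+1)})$ as the subgroup fixing $n+1$, and that $y_i^{(n)}=\sigma y_j^{(n)}$ implies $y_i^{(n+1)}=\sigma' y_j^{(n+1)}$. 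Consequently coincidences among the $k$ orbits only increase with $n$ and therefore stabilize; discarding redundant indices then yields a genuine disjoint union, since distinct orbits of a group action are automatically disjoint.

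The technical heart — and the step I expect to be the main obstacle — is the group theory behind (a): classifying subgroups $G\le\Sn_n$ containing the Young subgroup $\Sn_{\{d+1,\dots,n\}}$ when $n$ is large relative to $d$. The claim is that every such $G$ has the form $H\times\Sn_S$ for a subset $S\supseteq\{d+1,\dots,n\}$ with $|S|=n-m$, $m\le d$, and $H\le\Sn_{[n]\setminus S}$. The argument: $G$ permutes its own orbits, the orbit $S$ of $d+1$ contains $\{d+1,\dots,n\}$ so its complement $R:=[n]\setminus S$ lies in $[d]$; the image of $G$ under $G\to\Sn_S$ is transitive and contains the pointwise stabilizer in $\Sn_S$ of the set $S\cap[d]$, which has at most $d$ elements, and a short combinatorial argument — valid once $|S|\ge 2d$, hence for $n$ large — forces this image to be all of $\Sn_S$. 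Then $G\cap\Sn_S\trianglelefteq G$ has image $\Sn_S$-normal and equal to itself, so $G\cap\Sn_S\trianglelefteq\Sn_S$; since it contains $\Sn_{\{d+1,\dots,n\}}$, which is non-abelian and contains odd permutations, the classification of normal subgroups of $\Sn_S$ gives $G\cap\Sn_S=\Sn_S$, whence $G=H\times\Sn_S$ with $H=\mathrm{proj}_R(G)$. Applying this to $G=\operatorname{Stab}_{\Sn_n}(y_j^{(n)})$ produces sets $R_j^{(n)}\subseteq[d]$ and groups $H_j^{(n)}\le\Sn_{R_j^{(n)}}$; the inclusions from the previous paragraph show that for $n$ large $R_j^{(n+1)}\subseteq R_j^{(n)}$ and then, once these stabilize to a set $R_j$, that $H_j^{(n)}\le H_j^{(n+1)}$ inside the fixed finite group $\Sn_{R_j}$, so both stabilize to some $R_j$ of size $m_j\le d$ and $H_j\le\Sn_{m_j}$.

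Finally, choosing $N$ large enough that all of these stabilizations have occurred and all the invoked group-theoretic inequalities hold, we obtain for every $n\ge N$ a disjoint union $Z_n=\bigsqcup_{i\in I}\Sn_n\cdot y_i^{(n)}$ over a fixed index set $I$ with $\operatorname{Stab}_{\Sn_n}(y_i^{(n)})=H_i\times\Sn_{n-m_i}$, and hence $Z_n\cong\bigsqcup_{i\in I}\Sn_n/(H_i\times\Sn_{n-m_i})$ as $\Sn_n$-sets, which is exactly the assertion. The $\FI$-theoretic content is entirely in the bookkeeping with functoriality; the genuinely delicate input is the stability of the stabilizer data, which rests on the finitary subgroup-theoretic lemma about groups containing a large Young subgroup.
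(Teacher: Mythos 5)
The paper does not prove this statement at all---it is quoted from \cite{RSW} (Theorem A) with the proof deferred to that reference---so your argument has to stand on its own, and it does. The reduction of $Z_n$ to the finitely many orbits of the pushed-forward generators $y_j^{(n)}$, the monotone (hence eventually constant) pattern of orbit coincidences, and the eventual constancy of the data $(R_j,H_j)$ are all handled correctly via the identity $\sigma'\circ\iota_{n,n+1}=\iota_{n,n+1}\circ\sigma$. The load-bearing step is, as you identify, the subgroup lemma, and your sketch of it is sound: the $G$-orbit $S$ of $d+1$ and its complement $R\subseteq[d]$ are $G$-stable, so $G\leq \Sn_R\times\Sn_S$; the image of $G$ in $\Sn_S$ is transitive and contains the symmetric group on a subset of size greater than $|S|/2$ (once $n>2d$), hence is primitive and contains a transposition, hence equals $\Sn_S$ by Jordan's theorem; and then $G\cap\Sn_S$ is a normal subgroup of $\Sn_S$ containing odd permutations and more than two elements, so it is all of $\Sn_S$, which forces the splitting $G=H\times\Sn_S$ with $H$ the projection to $\Sn_R$. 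Two small points of care, neither of which breaks anything: the inclusion $\operatorname{Stab}_{\Sn_n}(y_j^{(n)})\into\operatorname{Stab}_{\Sn_{n+1}}(y_j^{(n+1)})$ need not be onto the subgroup fixing $n+1$ (you only use the containment, which is all that is needed for $R_j^{(n+1)}\subseteq R_j^{(n)}$ and $H_j^{(n)}\leq H_j^{(n+1)}$), and the final identification of $H_i\times\Sn_{[n]\setminus R_i}$ with a standard $H_i\times\Sn_{n-m_i}$ is up to conjugation, which is harmless for the asserted isomorphism of $\Sn_n$-sets.
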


In the paper \cite{RSW}, where $\FI$-sets were first examined, it is argued that many naturally occurring examples of $\FI$-sets come equipped with a collection of $\Sn_n$-equivariant relations. To be more precise, one has the following definition.

\begin{definition}\label{relation}
Let $Z_\dt$ and $Y_\dt$ denote two $\FI$-sets. Then the product $Z_\dt \times Y_\dt$ carries the structure of an $\FI$-set in a natural way. A \textbf{relation} between $Z_\dt$ and $Y_\dt$ is a subset $R_\dt$ of $Z_\dt \times Y_\dt$. If $Z_\dt = Y_\dt$, then we say that $R_\dt$ is a \textbf{relation on $Z_\dt$}

Given a relation $R_\dt$ between $Z_\dt$ and $Y_\dt$ we obtain a family of $\Sn_n$-linear maps
\[
r_n: \RR Z_n \rightarrow \RR Y_n
\]
where $\RR Z_n$ is the $\RR$-linearization of the set $Z_n$, and similarly for $\RR Y_n$. Properties of these maps were a major focus of \cite{RSW}. In this work, they will naturally arise as probability transition matrices of certain families of Markov chains.
\end{definition}

It is a fact, proven in \cite{RSW}, that any relation between two finitely generated $\FI$-sets is itself finitely generated. It can be proven from this that, if $Z_\dt$ is a finitely generated $\FI$-set, then the number of $\Sn_n$-orbits of pairs $(x,y) \in Z_n \times Z_n$ is eventually independent of $n$ (see \cite{RSW}). Perhaps the most notable classes of examples of $\FI$-set relations arise in the theory of $\FI$-graphs.

In \cite{RW}, the authors defined what they called $\FI$-graphs, functors from $\FI$ to the category of graphs and graph homomorphisms. In this case, one may think of an $\FI$-graph as an $\FI$-set of vertices paired with a symmetric relation dictating how these vertices are connected through edges. One should note in this case that the associated linear maps $r_n$ are what one would usually call the \textbf{adjacency matrices} of the corresponding graphs.

\begin{remark}
Recall that, in this paper, a graph is connected by definition.
\end{remark}

Some examples of $\FI$-graphs include the complete graphs $K_n$, whose vertices are the set $[n]$, and whose associated relation is comprised of all pairs $(i,j)$ with $i \neq j$, and the Kneser graphs $KG(n,r)$, whose vertices are given by $r$-element subsets of $n$ and whose associated relation is comprised of all pairs $(A,B)$ such that $A \cap B = \emptyset$. We will see other examples of $\FI$-graphs throughout the work (Section \ref{examples}).

For the remainder of this paper, the primary objects of study will be finitely generated $\FI$-graphs and, more specifically, simple random walks on these objects. In the prequel paper \cite{RW2}, a theory is developed for what the authors call models of random walks on $\FI$-graphs. This more general theory includes things such as lazy modifications of the simple walk. Going forward we limit our exposition to simple random walks, though everything we prove will work in the more general setting.

The following theorem follows from \cite[Corollary C]{RSW}.

\begin{theorem}[\cite{RSW}, Corollary C]\label{eigenvaluestab}
Let $G_\dt$ denote a finitely generated $\FI$-graph, with vertex $\FI$-set $V_\dt$ and edge relation $E_\dt$. Write $P_n$ for the transition matrix of the simple random walk on $G_n$. Then,
\begin{enumerate}
\item the number of distinct eigenvalues of $P_n$ is independent of $n$ for $n \gg 0$;
\item there exists a finite list $\{f_i\}$ of functions which are algebraic over $\Q(n)$ and real valued, for which $\{f_i(n)\}$ is the complete list of eigenvalues of $P_n$ for $n \gg 0$;
\item for any $f_i$ as in the previous part, the function
\[
n \mapsto \text{ the algebraic multiplicity of $f_i(n)$ as an eigenvalue of $P_n$}
\]
agrees with a polynomial for $n \gg 0$.
\end{enumerate}
\end{theorem}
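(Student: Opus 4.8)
The plan is to derive all three assertions from \cite[Corollary C]{RSW} together with the standard polynomiality bookkeeping of representation stability, after first recasting the normalized walk in a convenient form. To begin, note that since $E_\dt$ is a relation on the finitely generated $\FI$-set $V_\dt$, the adjacency matrices $A_n$ of the graphs $G_n$ are precisely the linear maps $r_n$ of Definition \ref{relation}; in particular each $A_n$ is an $\Sn_n$-equivariant endomorphism of $\RR V_n$. The degree of a vertex is constant on each $\Sn_n$-orbit of $V_n$, and by finite generation of $E_\dt$ (so that the number of neighbours lying in a fixed orbit is counted by a polynomial in $n$ for $n \gg 0$) this common degree agrees with a polynomial in $n$; hence the degree matrix $D_n$ is $\Sn_n$-equivariant, block-scalar on orbits, with polynomial entries. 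Consequently the symmetrized transition matrix $S_n := D_n^{-1/2} A_n D_n^{-1/2}$ is again an $\Sn_n$-equivariant endomorphism of $\RR V_n$, it is symmetric (the graph is undirected), and it is conjugate to $P_n = D_n^{-1}A_n$. Thus $P_n$ and $S_n$ have the same eigenvalues with the same algebraic multiplicities, and these eigenvalues are real.

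Next I would pass to the isotypic decomposition. By Theorem \ref{mainstructurethm}, $\RR V_\dt$ is a finitely generated $\FI$-module (generated in degree $\leq d$), so by \cite{CEF} its irreducible decomposition involves only finitely many stable families: there are finitely many partitions $\lambda^{(1)}, \dots, \lambda^{(N)}$ so that, for $n \gg 0$, $\RR V_n \cong \bigoplus_{j} S^{\lambda^{(j)}[n]} \otimes W_j(n)$, where $\lambda^{(j)}[n] = (n - |\lambda^{(j)}|, \lambda^{(j)})$, where $\dim W_j(n) = c_j(n)$ agrees with a polynomial in $n$, and where $\dim S^{\lambda^{(j)}[n]}$ agrees with a polynomial in $n$ by the hook length formula. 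Since $S_n$ is $\Sn_n$-equivariant, Schur's lemma provides a compatible decomposition $S_n \cong \bigoplus_{j} \mathrm{id}_{S^{\lambda^{(j)}[n]}} \otimes M_j(n)$ for symmetric matrices $M_j(n)$ of size $c_j(n)$, and the spectrum of $P_n$ is the union over $j$ of the spectra of the $M_j(n)$, each eigenvalue of $M_j(n)$ contributing to $P_n$ with multiplicity $\dim S^{\lambda^{(j)}[n]}$ times its multiplicity in $M_j(n)$.

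The substantive input, which is where \cite[Corollary C]{RSW} enters, is that for $n \gg 0$ each $M_j(n)$ can be taken (with respect to bases of the $W_j(n)$ varying compatibly in $n$) to have entries in the field of functions algebraic over $\QQ(n)$, and that its characteristic polynomial has a bounded number of distinct roots, each of multiplicity eventually agreeing with a polynomial in $n$. Granting this, assertion (2) follows: the eigenvalues of $M_j(n)$ are roots of a polynomial whose coefficients are algebraic over $\QQ(n)$, hence are themselves algebraic over $\QQ(n)$; there are finitely many of them; they are real-valued by the first paragraph (an algebraic function real at infinitely many integers is real on its branch, since it then agrees with its own complex conjugate); and collecting all of them over all $j$ gives a finite list $\{f_i\}$ that is, by construction, the complete list of eigenvalues of $P_n$ for $n \gg 0$. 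For (1) and (3) I would use the elementary fact that two distinct functions algebraic over $\QQ(n)$ agree on only finitely many integers: therefore, for $n$ large, the partition of the values $\{f_i(n)\}$ into coincidence classes is independent of $n$, so the number of distinct eigenvalues of $P_n$ is eventually constant, giving (1); and the multiplicity of a fixed $f_i(n)$ in $P_n$, being a finite sum of terms $(\text{multiplicity of } f_i(n) \text{ in } M_j(n)) \cdot \dim S^{\lambda^{(j)}[n]}$ ranging over the eventually-fixed set of pairs $(i,j)$ that contribute, is eventually a sum of products of polynomials, hence eventually polynomial, giving (3).

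I expect the only real obstacle to be the degree normalization. The cited \cite[Corollary C]{RSW} is phrased for the $\{0,1\}$-valued matrices $r_n$ attached to relations, whereas $P_n = D_n^{-1}A_n$, so one must check that conjugating $A_n$ by $D_n^{1/2}$ keeps us inside the class of $\FI$-structured equivariant operators to which the arguments of \cite{RSW} apply; this is exactly what the first paragraph establishes ($D_n$ is orbit-scalar with polynomial entries, so $D_n^{\pm 1/2}$ introduces only functions algebraic over $\QQ(n)$ and does not disturb equivariance). When $G_\dt$ is regular — as for $K_n$, $KG(n,r)$ and $J(n,r)$ — this point is vacuous, since then $P_n$ is simply $A_n$ divided by a polynomial in $n$. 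The remaining steps are the standard representation-stability bookkeeping sketched above.
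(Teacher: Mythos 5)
The paper does not actually prove Theorem~\ref{eigenvaluestab}: it states that the result ``follows from \cite[Corollary C]{RSW}'' and cites it without further derivation, so there is no in-paper proof to compare against line by line. Your proposal is a reasonable reconstruction of how one deduces the statement from that corollary, and it correctly isolates the only genuine translation step: $P_n = D_n^{-1}A_n$, whereas the cited result is about the equivariant maps $r_n = A_n$ attached to a relation, so the degree normalization must be accounted for. Symmetrizing to $S_n = D_n^{-1/2}A_nD_n^{-1/2}$ and observing that $D_n$ is orbit-constant with polynomial entries (so $D_n^{\pm 1/2}$ is equivariant and introduces only functions algebraic over $\QQ(n)$) is the right way to bridge this, and the bookkeeping via Schur blocks $M_j(n)$ of eventually constant size, with multiplicities scaled by $\dim S^{\lambda^{(j)}[n]}$, is the standard representation-stability argument.

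One minor imprecision: you attribute to $M_j(n)$ the property that its eigenvalue multiplicities ``agree with a polynomial in $n$.'' Since multiplicity stability makes $\dim W_j(n)$ eventually constant, $M_j(n)$ has eventually fixed size, and the multiplicities of its eigenvalues are merely eventually constant; all of the polynomial growth in the multiplicities of $P_n$ enters through the factor $\dim S^{\lambda^{(j)}[n]}$. This does not affect the conclusion but should be stated more precisely if the argument were written out in full.
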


In the next section we will relate the conclusions of this theorem with the Diaconis cutoff heuristic.

\section{Rational transitions between Markov chains}

In this section we introduce the concept of a rational transition between Markov chains. Inutitively, these are circumstances where one imagines going from one Markov chain on a state space $\X$ to another by deforming the transition matrix by rational functions.

\begin{definition}
Let $\X$ be a finite set, and let $(X_t,P)$ and $(Y_t,Q)$ be two Markov chains on $\X$. Then a \textbf{rational transition from  $(X_t,P)$ to $(Y_t,Q)$} is a family of Markov chains $\{(X^{(n)}_t,P^{(n)})\}_{n \geq 0}$ such that:
\begin{enumerate}
\item $P^{(n)}$ is a matrix with coefficients in the field of rational functions $\R(n)$;
\item $P^{(0)} = P$, and $\lim_{n \to \infty} P^{(n)}$ exists and agrees with $Q$.
\end{enumerate}
\end{definition}

\begin{example}
Let $p,q \in (0,1)$, and let $\X = \{x,y\}$. Then we have two Markov chains on $\X$ given by $P = \begin{pmatrix}p & 1-p\\ 1-p & p\end{pmatrix}$, and $Q = \begin{pmatrix}q & 1-q\\ 1-q & q\end{pmatrix}$. Then one possible rational transition between $P$ and $Q$ is the family of Markov chains with transition matrices given by
\[
P^{(n)} = \begin{pmatrix} \frac{1}{n+1}p + \frac{n}{n+1}q & \frac{1}{n+1} (1-p)+ \frac{n}{n+1}(1-q) \\ \frac{1}{n+1}(1-p) + \frac{n}{n+1}(1-q) & \frac{1}{n+1}p + \frac{n}{n+1}q \end{pmatrix}
\]
\end{example}

In this paper, rational transitions between Markov chains will naturally arise in the context of orbit walks associated to $\FI$-graphs.\\

For our purposes, it will be important to ask the question how do various statistics such as relaxation time and mixing time vary during a rational transition? The first of these quantities can be answered through simple linear algebra.

\begin{proposition}\label{relaxalgebra}
Let $\{(X^{(n)}_t,P^{(n)})\}_{n \geq 0}$ be a rational transition between two Markov chains on a state space $\X$. Then, for $n \gg 0$, the function,
\[
n \mapsto t_{rel}^{(n)}
\]
agrees with a function which is algebraic over the field $\R(n)$.
\end{proposition}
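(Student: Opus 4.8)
The plan is to show that $t_{rel}^{(n)} = 1/(1-\lambda^{(n)})$, where $\lambda^{(n)}$ is the second-largest-in-absolute-value eigenvalue of $P^{(n)}$, is algebraic over $\R(n)$, and the natural route is to control the eigenvalues themselves. First I would consider the characteristic polynomial $\chi_n(t) = \det(tI - P^{(n)})$. Since the entries of $P^{(n)}$ lie in $\R(n)$ by hypothesis, $\chi_n(t)$ is a polynomial in $t$ whose coefficients are elements of $\R(n)$; that is, $\chi_n(t) \in \R(n)[t]$ is a single polynomial (of fixed degree $|\X|$) in two variables. Its roots, as functions of $n$, are therefore algebraic functions over $\R(n)$ in the sense required: each root is a root of a fixed polynomial with rational-function coefficients. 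This is the essential algebraic input, and it is immediate from the definition of a rational transition.

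Next I would address the ordering issue: $\lambda^{(n)}$ is not literally "a root of $\chi_n$" but rather "the second largest among the roots, in absolute value," which is a choice depending on $n$. The key observation is that for $n \gg 0$ this choice stabilizes in the following sense. Since $\lim_{n \to \infty} P^{(n)} = Q$ exists, the coefficients of $\chi_n(t)$ converge to those of the characteristic polynomial of $Q$; hence the multiset of eigenvalues of $P^{(n)}$ converges to the multiset of eigenvalues of $Q$. One can then argue that the roots of $\chi_n$ can be organized into finitely many branches, each an algebraic function of $n$, each converging to some eigenvalue of $Q$ as $n \to \infty$. For $n$ large enough, the absolute-value ordering among the limiting eigenvalues of $Q$ that are distinct is respected by the corresponding branches; eigenvalues of $Q$ that are equal in absolute value correspond to branches all of which are algebraic, so picking whichever is "second largest" still yields an algebraic function (a maximum of finitely many algebraic functions agrees, on each of finitely many intervals, with one of them, and hence is piecewise algebraic — but since we only need "agrees with an algebraic function for $n \gg 0$," and the relevant branch is eventually determined, this suffices). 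Thus $\lambda^{(n)}$ agrees with an algebraic function of $n$ for $n \gg 0$, and so does $1 - \lambda^{(n)}$.

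Finally, $t_{rel}^{(n)} = 1/(1-\lambda^{(n)})$ is a rational expression in an algebraic function, hence itself algebraic over $\R(n)$, provided $1 - \lambda^{(n)} \neq 0$ for $n \gg 0$ — which holds because $P^{(n)}$ is irreducible, so $\lambda^{(n)} \neq 1$. This completes the argument.

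The main obstacle is the second step: making precise the claim that the second-largest-in-absolute-value eigenvalue is \emph{eventually} given by a single algebraic branch. The subtlety is genuine when $Q$ has several eigenvalues of equal modulus, since then the $P^{(n)}$-eigenvalues near them could in principle oscillate in their ordering; one handles this by noting that each such eigenvalue is a root of the fixed polynomial $\chi_n$ and that a max of finitely many functions, each algebraic over $\R(n)$, agrees with one of them on a cofinite set of integers (using, e.g., that two distinct algebraic functions over $\R(n)$ can agree only on a set whose intersection with $\NN$ is finite, so the $n$ where the ordering changes is finite). I expect the rest — the characteristic polynomial computation and the convergence of coefficients — to be routine.
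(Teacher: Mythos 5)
Your proposal is correct and follows essentially the same route as the paper: the eigenvalues are roots of the characteristic polynomial over $\R(n)$, hence algebraic, and the identification of the second-largest-in-modulus eigenvalue stabilizes for $n \gg 0$ because two distinct algebraic functions (or an algebraic function and zero) can agree at only finitely many integers. The paper phrases the stabilization via signs of real eigenvalue branches (using reversibility) and differences of algebraic functions rather than via convergence to the eigenvalues of $Q$, but the substance is the same.
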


\begin{proof}
The matrix $P^{(n)}$ is a $|\X| \times |\X|$ square matrix over the field $\R(n)$. It follows that the eigenvalues of $P^{(n)}$ are algebraic over $\R(n)$, as they satisfy the characteristic polynomial of $P^{(n)}$. Note that by assumption our Markov chains are reversible, so we may also assume that these eigenvalues are real valued. We denote these eigenvalues by $\lambda(n)$ in what follows.

It remains to argue that $\max_{\lambda \neq 1}\{|\lambda(n)|\}$ is algebraic for $n \gg 0$. Indeed, if $\lambda(n)$ is algebraic with real values, then it only assumes the value 0 finitely many times. In particular, for $n \gg 0$, $\lambda(n)$ is of fixed sign. Therefore, $|\lambda(n)|$ is in agreement with an algebraic function (i.e. either $\lambda(n)$ or $-\lambda(n)$) for $n \gg 0$. A similar argument then implies that the maximum $\max_{\lambda \neq 1}\{|\lambda(n)|\}$ is uniquely achieved by a single $|\lambda(n)|$ for $n \gg 0$, as the difference of two algebraic functions is still algebraic. This completes the proof.
\end{proof}

Resolving the eventual behavior of the mixing time $\tme^{(n)}$ is a bit more subtle to contend with. Considering that our concern is mostly in its behavior in the large $n$ limit, we will rely on a hitting time approximation due to Peres and Sousi \cite{PeSo}.

\begin{definition}
Let $(X_t,P)$ denote a Markov chain on a state space $\X$, with stationary distribution $\pi$. Then for $\alpha \in (0,1/2)$, the $\textbf{$\alpha$-large-set hitting time}$ is the quantity,
\[
t_{hit}(\alpha) = \max_{A \subseteq \X, x\in X, \pi(A) \geq \alpha} E_x[\tau_A]
\]
where $E_x[\tau_A]$ is the expected time for the Markov chain to enter the set $A$, conditional on it beginning at $X_0 = x$.
\end{definition}

intuitively, one should not expect the Markov chain to have mixed before it is able to hit big sets. Remarkably, however, there is a sense in which the converse is true as well. This is summarized in the following theorem.

\begin{theorem} [Peres and Sousi, \cite{PeSo}] \label{hitismix}
Let $(X_t,P)$ be a Markov chain on a state space $\X$. Then for all $\alpha \in (0,1/4)$, there exist constants $c_\alpha,c'_\alpha$ such that,
\[
c_\alpha t_{hit}(\alpha) \leq \tm \leq c'_\alpha t_{hit}(\alpha).
\]
Importantly, $c_\alpha$ and $c'_\alpha$ depend only on $\alpha$, and not $P$ or $\X$.
\end{theorem}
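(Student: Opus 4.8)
The plan is to prove the two inequalities $c_\alpha t_{hit}(\alpha) \le \tm$ and $\tm \le c'_\alpha t_{hit}(\alpha)$ separately; this is the theorem of Peres and Sousi \cite{PeSo}, so what follows is a sketch of their argument (in the paper itself the statement is simply quoted). Throughout, write $h = t_{hit}(\alpha)$ and recall the standing assumption that $P$ is irreducible, aperiodic, and reversible.

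The bound $c_\alpha t_{hit}(\alpha) \le \tm$ --- ``hitting every big set is at least as fast as mixing'' --- is elementary and uses neither reversibility nor the spectral structure of $P$. By submultiplicativity of the total variation distance there is a constant $\kappa_\alpha$, depending only on $\alpha$, with $t_{\mathrm{mix}}(\alpha/2) \le \kappa_\alpha \tm$; set $m = \kappa_\alpha \tm$. Then for every state $y$ and every $A$ with $\pi(A) \ge \alpha$,
\[
P^m(y,A) \;\ge\; \pi(A) - \tfrac{\alpha}{2} \;\ge\; \tfrac{\alpha}{2}.
\]
Applying the Markov property at the times $m, 2m, 3m, \dots$ gives $\mathbb{P}_x(\tau_A > km) \le (1-\tfrac{\alpha}{2})^k$ for every starting state $x$, hence $E_x[\tau_A] \le m\sum_{k \ge 0}(1-\tfrac{\alpha}{2})^k = \tfrac{2\kappa_\alpha}{\alpha}\tm$. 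Taking the maximum over $x$ and over all $A$ with $\pi(A) \ge \alpha$ yields $t_{hit}(\alpha) \le \tfrac{2\kappa_\alpha}{\alpha}\tm$, i.e.\ the claim with $c_\alpha = \alpha/(2\kappa_\alpha)$.

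The reverse bound $\tm \le c'_\alpha t_{hit}(\alpha)$ --- ``a chain that reaches every big set quickly has already mixed'' --- is the substantive direction, and I expect it to be the main obstacle; it is here that reversibility is essential. The natural first move is to fix a state $x$ and a time $T$ realizing $d(T) = \max_y \|P^T(y,\cdot) - \pi\|_{\mathrm{TV}}$ and to let $B$ be a set achieving this distance, i.e.\ a (co)level set of $y \mapsto P^T(x,y)/\pi(y)$; replacing $B$ by its complement if necessary we may assume $\pi(B) \le 1/2$, so that if $d(T) > 1/4$ then $\pi(B) > 1/4 > \alpha$ and $B$ is a large set. Markov's inequality then shows that from $x$ the chain has \emph{visited} $B$ within time $4h$ with probability at least $3/4$. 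The real difficulty is that having visited $B$ is much weaker than being \emph{distributed like $\pi$} at the distinguished time $T$, and bridging this gap is exactly where the work lies. Peres and Sousi do so via an iterative scheme that successively hits a carefully chosen family of large sets, tracking the decay of the distance at each stage; a more conceptual alternative is to sandwich both $\tm$ and $t_{hit}(\alpha)$ between integrals of the reciprocal isoperimetric (spectral) profile of the chain using Morris--Peres evolving sets. Either route produces a constant $c'_\alpha$ depending only on $\alpha$, and for the applications in this paper only the qualitative conclusion --- that $\tm \asymp t_{hit}(\alpha)$ up to $\alpha$-dependent constants --- will be used.
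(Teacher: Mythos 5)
This statement is imported verbatim from Peres and Sousi \cite{PeSo}; the paper gives no proof of it, so there is no internal argument to compare yours against. Your proof of the easy direction $t_{hit}(\alpha) \leq \tfrac{2\kappa_\alpha}{\alpha}\,\tm$ is correct and complete: submultiplicativity gives $t_{\text{mix}}(\alpha/2) \leq \lceil \log_2(2/\alpha)\rceil\, \tm$, and the geometric-trials bound on $E_x[\tau_A]$ is the standard argument. The hard direction $\tm \leq c'_\alpha t_{hit}(\alpha)$ is not proved in your write-up; you correctly identify it as the substantive content of \cite{PeSo} and defer to that reference, which is exactly what the paper itself does, so this is acceptable for the role the theorem plays here. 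Two cautions about the sketch you offer in its place. First, the level-set step is muddled: the set $B=\{y : \pi(y) > P^T(x,y)\}$ automatically satisfies $\pi(B) \geq d(T)$, which is all you need for $B$ to qualify as a large set, whereas passing to the complement to force $\pi(B)\leq 1/2$ can destroy precisely that lower bound. Second, as you yourself observe, ``visited $B$ by time $4h$ with probability $3/4$'' gives no control on the total variation distance at a fixed time, so the paragraph does not constitute progress toward the bound. If you wanted to actually prove this direction rather than cite it, the route in \cite{PeSo} runs through the comparison of $\tm$ with minimal expected stationary stopping times together with a maximal-inequality argument for reversible chains; reversibility, assumed throughout this paper, is where that machinery enters.
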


\begin{remark}
Once again, reversibility of $(X_t,P)$ is critical for the above theorem.
\end{remark}

Theorem \ref{hitismix} can be thought of as saying that $\alpha$-large-set hitting times are essentially the same as mixing times. The extra information that the constants $c_\alpha$ and $c'_\alpha$ do not depend on the process itself will allow us to use these inequalities in entire families of Markov chains. In particular, it will allow us to resolve the question of the kinds of growth that mixing times of rational transitions can attain.\\

\begin{theorem}\label{RatMixing}
Let $\{(X_t^{(n)},P^{(n)}\}_{n \geq 0}$ be a rational transition between Markov chains on a state space $\X$. Then the function
\[
n \mapsto \tm^{(n)}
\]
is $\Theta(f(n))$, where $f(n) \in \R(n)$. That is to say, there exists $f(n) \in R(n)$ as well as constants $\beta,\gamma$ such that for all $n \gg 0$
\[
\gamma f(n) \leq \tm^{(n)} \leq \beta f(n).
\]
\end{theorem}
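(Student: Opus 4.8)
The plan is to reduce the mixing time to a large-set hitting time via Theorem~\ref{hitismix}, and then to show that the latter is $\Theta$ of a rational function by analyzing it as a finite maximum/minimum of solutions to linear systems with coefficients in $\R(n)$. First I would fix any $\alpha \in (0,1/4)$ and invoke Theorem~\ref{hitismix} to get constants $c_\alpha, c'_\alpha$ (independent of $n$) with $c_\alpha t_{hit}^{(n)}(\alpha) \leq \tm^{(n)} \leq c'_\alpha t_{hit}^{(n)}(\alpha)$, so it suffices to prove that $n \mapsto t_{hit}^{(n)}(\alpha)$ is $\Theta(f(n))$ for some $f \in \R(n)$. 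Here the key structural input is that the state space $\X$ is a \emph{fixed finite set}: the number of subsets $A \subseteq \X$, the number of starting states $x$, and hence the number of pairs $(A,x)$ is bounded independently of $n$. Moreover, by definition of a rational transition, $P^{(n)}$ has entries in $\R(n)$ and converges to $Q$; since $Q$ is (by our standing assumptions) irreducible, for $n \gg 0$ each entry $P^{(n)}(x,y)$ is either identically zero or eventually nonzero and of fixed sign, so the underlying directed graph of allowed transitions and the set $\{A : \pi^{(n)}(A) \geq \alpha\}$ stabilize for $n \gg 0$ (the stationary distribution $\pi^{(n)}$ also has entries algebraic over $\R(n)$ by Cramer's rule applied to $\pi(P^{(n)} - I) = 0$, and is uniform in our transitive setting, so $\pi^{(n)}(A)$ is in fact a constant rational number $|A|/|\X|$).

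Next I would fix a single pair $(A,x)$ with $\pi^{(n)}(A) \geq \alpha$ for all $n \gg 0$ and analyze $E_x[\tau_A]$. Standard Markov chain theory gives that the vector $(E_z[\tau_A])_{z \notin A}$ is the unique solution of the linear system $(I - P^{(n)}_{A^c})\, v = \mathbf{1}$, where $P^{(n)}_{A^c}$ is the substochastic matrix obtained by restricting $P^{(n)}$ to the rows and columns indexed by $\X \setminus A$. Since $P^{(n)}_{A^c}$ has entries in $\R(n)$ and $I - P^{(n)}_{A^c}$ is invertible (the chain is irreducible, so absorption into $A$ is certain), Cramer's rule expresses each $E_z[\tau_A]$ as a ratio of determinants of matrices over $\R(n)$, hence as an element $g_{A,x}(n) \in \R(n)$ that is valid for $n \gg 0$. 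Thus $t_{hit}^{(n)}(\alpha) = \max_{(A,x)} g_{A,x}(n)$ is a maximum of finitely many rational functions of $n$. Since any two distinct rational functions agree at only finitely many points, for $n \gg 0$ the maximum is achieved by one single $g_{A^*,x^*}(n)$; setting $f(n) := g_{A^*,x^*}(n)$ gives $t_{hit}^{(n)}(\alpha) = f(n)$ exactly for $n \gg 0$, which is certainly $\Theta(f(n))$. Combining with the Peres--Sousi bounds, $\gamma f(n) \leq \tm^{(n)} \leq \beta f(n)$ with $\gamma = c_\alpha$ and $\beta = c'_\alpha$ (absorbing the implied equality into the constants), completing the proof. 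Note $f(n) \geq 1$ for $n \gg 0$ since mixing times are positive integers, so there is no sign/positivity pathology.

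The main obstacle, and the step deserving the most care, is the claim that the relevant combinatorial data stabilizes: that for $n \gg 0$ the set of "large" target sets $\{A : \pi^{(n)}(A) \geq \alpha\}$ is independent of $n$, that $I - P^{(n)}_{A^c}$ is genuinely invertible over $\R(n)$ (not merely generically), and that the hitting-time vector really is \emph{the} solution given by Cramer's rule rather than some spurious root. This requires knowing $P^{(n)}$ is a genuine stochastic matrix (nonnegative entries, rows summing to $1$) for all large $n$ — which holds because the limit $Q$ is a stochastic matrix and each entry is a rational function, so nonnegativity and the row-sum identity, holding in the limit and being closed/algebraic conditions, hold for $n \gg 0$ — together with irreducibility, which I will take as part of our standing hypothesis that all chains in sight are irreducible, aperiodic, and reversible. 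Once these "eventually genuine Markov chain" facts are in hand, the rest is the routine linear algebra sketched above.
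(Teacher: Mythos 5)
Your proposal is correct and follows essentially the same route as the paper's proof: reduce to the $\alpha$-large-set hitting time via the Peres--Sousi bounds, express each $E_x[\tau_A]$ as the solution of the linear system $(I-P^{(n)}|_{A^c})v=\mathbf{1}$ over $\R(n)$, note that the set of $\alpha$-large targets stabilizes, and observe that a maximum of finitely many rational functions eventually agrees with one of them. The extra care you take with stochasticity for $n\gg0$ and the invertibility of the minor are sound elaborations of steps the paper treats more briefly.
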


\begin{proof}
By Theorem \ref{hitismix}, it will suffice to find some $\alpha \in (0,1/2)$ such that $t_{hit}^{(n)}(\alpha)$ is a rational function for $n \gg 0$. Indeed, we will show this is the case for all $\alpha \in (0,1/2)$.

We will first show that, for any $x \in \X$ and $A \subseteq \X$, the function
\[
n \mapsto E_x^{(n)}[\tau_A]
\]
is in agreement with a rational function for all $n \geq 0$. To see this, let $V$ be the $\R(n)$-vector-space with basis in bijection with those elements of $\X$ not in $A$. Then by the usual first-step recursion satisfied by hitting times, we find $E_x[\tau_A]$ to be the $x$-coordinate of the vector $Q \in V$, defined by the matrix equation
\[
(I-P^{(n)}|_V)Q = \mathbf{1}
\]
where $\mathbf{1}$ is the vector $\sum_{y \notin A}y$. The matrix $I-P^{(n)}$ is generally not invertible, however the minor consisting of only those rows and columns corresponding to elements not in $A$ is invertible. In particular, $Q$ is unique and well-defined. This concludes the proof of our first claim.

Moving on, we first recall that,
\[
t_{hit}^{(n)}(\alpha) = \max_{A \subseteq \X, x\in X, \pi^{(n)}(A) \geq \alpha} E_x[\tau_A].
\]
Observe that the set $S^{(n)}_\alpha := \{A \subseteq \X \mid \pi^{(n)}(A) \geq \alpha\}$ is independent of $n$ whenever $n \gg 0$, as $\pi^{(n)}$ is an element of an $\R(n)$ vector space. Just as we argued in the proof of Proposition \ref{relaxalgebra}, it follows from the previous paragraphs and our observation about $S_\alpha^{(n)}$ that the maximum of the set
\[
\{E^{(n)}_x[\tau_A] \mid x \in \X, A \in S^{(n)}_\alpha\}
\]
is achieved by a choice of $A$ and $x$ which is unchanging in $n$, whenever $n \gg 0$. This concludes the proof.
\end{proof}

\section{Walks on transitive $\FI$-graphs}

In this section, we discuss useful properties held by what we call transitive $\FI$-graphs. This leads into the next section wherein we conclude by proving our main theorem.

\begin{definition}
Let $G_\dt$ be a finitely generated $\FI$-graph with vertex $\FI$-set $V_\dt$ and edge relation $E_\dt$. We say that $G_\dt$ is \textbf{transitive} if, for all $n \gg 0$, the action of $\Sn_n$ on $V_n$ is transitive.
\end{definition}

Examples of transitive $\FI$-graphs include the complete graphs $K_n$, and the Kneser graphs $K(n,r)$. We will see many more examples later (Section \ref{examples}).

\begin{proposition}\label{notbipartite}
Let $G_\dt$ be a transitive $\FI$-graph. Then for $n \gg 0$, $G_n$ is not bipartite. In particular, the simple random walk on $G_n$ is aperiodic for $n \gg 0$.
\end{proposition}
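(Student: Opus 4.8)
The plan is to show that a transitive $\FI$-graph eventually contains an odd cycle, which rules out bipartiteness and hence (together with connectedness, which is part of the definition of graph here) gives aperiodicity of the simple random walk. The key structural input is Theorem \ref{mainstructurethm} applied to the edge relation $E_\dt \subseteq V_\dt \times V_\dt$: since $G_\dt$ is finitely generated, so is $E_\dt$, and therefore for $n \gg 0$ the set of $\Sn_n$-orbits on $E_n$ stabilizes, i.e. the number of ``edge types'' is independent of $n$. Fix such an orbit representative; because $G_n$ is connected with more than one vertex for large $n$, $E_n$ is nonempty, so at least one such edge orbit exists.

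First I would set up the combinatorics of a single edge orbit. Pick $n$ large and an edge $\{v, w\} \in E_n$. Transitivity of the $\Sn_n$-action on $V_n$ means we can understand the whole graph through the stabilizer $H = \mathrm{Stab}_{\Sn_n}(v)$: the neighbors of $v$ are among the $\Sn_n$-translates of $w$, and connectivity forces $G_n$ to have diameter controlled by the orbit structure. The concrete goal is to produce a closed walk of odd length. The natural approach: find group elements $g_1, \ldots, g_k \in \Sn_n$ with $k$ odd such that $v \to g_1 v \to g_1 g_2 v \to \cdots$ is a walk closing up at $v$, with each consecutive pair an edge. Equivalently, because the action is transitive and edge-preserving, it suffices to find a single element $\sigma \in \Sn_n$ that moves $v$ to an adjacent vertex (so $\{v, \sigma v\} \in E_n$) and has odd order, or more flexibly a product of such ``edge-inducing'' permutations that is trivial with an odd number of factors.

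The main obstacle — and the place where ``$n \gg 0$'' genuinely enters — is ensuring the walk we build has \emph{odd} length rather than only even length; a priori a transitive graph could be bipartite (e.g.\ $K_{3,3}$ is vertex-transitive). The resolution should exploit that for $n$ large the edge orbits are ``stable'' and involve only boundedly many moved points: an edge type is, by Theorem \ref{mainstructurethm}-style reasoning, governed by data living on a bounded subset of $[n]$, while $n - m_i$ coordinates are ``free.'' Concretely, I expect to argue that if $\{v,w\}$ is an edge, then one can choose a $3$-cycle $\tau$ supported on coordinates that lie outside the ``essential support'' of the edge type, so that $v$, $\tau v$, $\tau^2 v$ are pairwise distinct and each consecutive pair (including $\{\tau^2 v, v\}$) is again an edge of the same type — giving a triangle, the shortest odd cycle. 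The point is that for $n$ large there are enough ``extra'' coordinates to house such a $\tau$ while the edge condition, depending only on the stabilized local data, is preserved. Once an odd cycle exists in $G_n$, $G_n$ is not bipartite; combined with connectedness this makes the simple random walk aperiodic, since in a connected non-bipartite graph the gcd of cycle lengths is $1$, which is the period of the walk.

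Alternatively, if producing a triangle directly is awkward for a given edge type, a cleaner fallback is: fix an edge $\{v,w\}$; by connectedness there is a path from $v$ to $w$ of some length $\ell$, giving a closed walk of length $\ell + 1$; separately, since $G_n$ has $\ge 2$ vertices there is a closed walk of length $2$ (go out and back along an edge); if $\ell + 1$ is odd we are done, and if $\ell+1$ is even we instead need an odd-length path between $v$ and $w$. Here I would again invoke the stability of orbit structure: the distance function and the set of achievable path-lengths between two vertices of a given ``type pair'' is eventually independent of $n$, and one checks — using the freedom in the large-$n$ regime to precompose the path with a transposition-type symmetry — that both parities of $v$–$w$ path length occur, forcing an odd cycle. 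I expect the first approach (exhibiting a triangle via a $3$-cycle in the free coordinates) to be the slicker writeup, with the structural theorem doing the heavy lifting to guarantee that the edge condition only constrains boundedly many coordinates.
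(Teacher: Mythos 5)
Your overall strategy (exhibit an odd closed walk for $n\gg 0$) is a reasonable combinatorial route, but the key step is not actually carried out, and the specific mechanism you propose is self-defeating. You suggest choosing a $3$-cycle $\tau$ ``supported on coordinates that lie outside the essential support of the edge type.'' A vertex $v$ is determined by its labels, which lie inside that essential support; a permutation supported entirely outside it therefore fixes $v$ (and $w$), so $v$, $\tau v$, $\tau^2 v$ are not pairwise distinct --- you get no triangle at all. The salvageable version of this idea is to take $\tau$ of order $3$ that moves the labels of $v$ onto three pairwise disjoint positions (this works for Kneser-type edges, where disjointness of labels implies adjacency), but that only handles edge orbits whose defining intersection pattern is ``totally disjoint.'' For a general edge orbit, adjacency of $v$ and $\tau v$ requires a \emph{specific} overlap pattern between their labels, and there is no reason an order-$3$ permutation realizing that pattern simultaneously for all three pairs (including $\{\tau^2 v, v\}$) should exist. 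Your fallback (``one checks that both parities of $v$--$w$ path length occur'') is precisely the content of the proposition restated, not an argument. So as written the proof has a genuine gap at its central step.

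For comparison, the paper avoids this combinatorics entirely and argues spectrally: a connected graph is bipartite iff its adjacency spectrum is symmetric about $0$, in which case the smallest eigenvalue $\lambda_m=-\lambda_1$ has multiplicity $1$. Since $\Sn_n$ commutes with the adjacency matrix, a one-dimensional eigenspace must be the trivial or the sign representation of $\Sn_n$; representation stability rules out the sign representation for $n\gg 0$, and transitivity says $\RR V_n$ contains exactly one copy of the trivial representation, which is already occupied by the $\lambda_1$-eigenspace. Hence $\lambda_m$ cannot have a one-dimensional eigenspace and $G_n$ is not bipartite. If you want to pursue your combinatorial approach, you would need to prove a structural statement of this strength by hand (e.g., that between any two adjacent vertices both parities of walk length occur for $n\gg 0$), which is likely harder than the two-line representation-theoretic argument.
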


\begin{proof}
We prove this proposition using the spectral characterization of connected bipartite graphs. Let $G$ be a graph, and assume adjacency matrix of $G$ has distinct eigenvalues 
\[
\lambda_1 > \lambda_2 > \ldots > \lambda_m
\]
Then $G$ is bipartite if and only if, for each $i$, $\lambda_i = -\lambda_{m-i+1}$, and the multiplicity of $\lambda_i$ is equal to that of $\lambda_{m-i+1}$.

Because our graphs are connected, this implies that if $G_n$ is bipartite then both $\lambda_1$ and $\lambda_m$ appear with multiplicity 1. In our setting, the action of the symmetric group commutes with the adjacency matrix and therefore the eigenspaces also carry an action of the symmetric group. This shows that the eigenspaces of $\lambda_1$ and $\lambda_m$ must either be isomorphic to the trivial representation or the sign representation. Representation stability theory implies that the sign representation cannot appear for $n \gg 0$ \cite{CEF}, while transitivity implies that there cannot be more than one copy of the trivial representation. This shows that $G_n$ cannot be bipartite.
\end{proof}

\begin{remark}
To adapt the above proof to cases of more general models of random walks on $\FI$-graphs, one simply notes that if a random walk is periodic with period $d$, then the $d$-step walk is aperodic and reducible. This would imply that spectrum of the $d$-step walk has multiple eigenvalues of multiplicity 1, whence the above argument leads to a contradiction.
\end{remark}

The above proposition is useful, as it allows us to essentially ignore possible issues with periodicity. Our second result is more related to the heuristics of Diaconis. In particular, we will find that the second largest eigenvalue of a transitive $\FI$-graph must grow non-trivially with $n$. Note the similarity in the style of proofs between the following proposition and the previous one.

\begin{proposition}\label{eigengrow}
Let $G_\dt$ be a transitive $\FI$-graph, and let $P_n$ denote the transition matrix of the simple random walk on $G_n$. If $\lambda(n)$ is the second largest eigenvalue of $P_n$, then the multiplicity of $\lambda(n)$ agrees with a non-constant polynomial in $n$ for $n \gg 0$.
\end{proposition}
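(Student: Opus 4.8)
The plan is to use the $\Sn_n$-equivariance of the transition matrix together with representation stability. First I would set up the linear algebra. Since $G_\dt$ is transitive, each $G_n$ is vertex-transitive, hence $d_n$-regular, so $P_n=\tfrac{1}{d_n}A_n$ is symmetric; in particular it is diagonalizable with real eigenvalues and its algebraic and geometric multiplicities agree. The permutation action of $\Sn_n$ on $\RR V_n$ commutes with $P_n$, so each eigenspace $\ker(P_n-\mu I)$ is an $\Sn_n$-subrepresentation, and eigenspaces for distinct eigenvalues are orthogonal. The all-ones vector $\mathbf 1$ spans the $1$-eigenspace, and since $G_n$ is connected and, for $n\gg 0$, aperiodic (Proposition \ref{notbipartite}), the eigenvalue $1$ is simple and strictly larger than any other eigenvalue. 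Excluding the degenerate case in which $G_n$ is eventually a single vertex (by Theorem \ref{mainstructurethm} the only alternative is $|V_n|\to\infty$), for $n\gg 0$ there is thus a well-defined second largest eigenvalue $\lambda(n)<1$; and since distinct algebraic functions cross only finitely often, as in the proof of Proposition \ref{relaxalgebra}, $\lambda(n)$ eventually coincides with a single function $f_{i_0}(n)$ from the list in Theorem \ref{eigenvaluestab}, whose multiplicity is a polynomial for $n\gg 0$ by part (3) of that theorem.

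Next I would bring in representation stability. The FI-module $\RR V_\dt$ is finitely generated, so by \cite{CEF} --- exactly as in the proof of Proposition \ref{notbipartite} --- there is an $N$ so that for all $n\ge N$ every irreducible constituent of $\RR V_n$ is of the form $V(\lambda)_n$ with $\lambda$ in a fixed finite set of partitions. Among these, $V(\emptyset)_n$ is the trivial representation, which occurs in $\RR V_n$ with multiplicity $1$ by transitivity (it is the span of $\mathbf 1$); and for $\lambda\ne\emptyset$, $\dim V(\lambda)_n$ is a polynomial in $n$ of degree $|\lambda|\ge 1$, so $\dim V(\lambda)_n\to\infty$, and in fact $\dim V(\lambda)_n\ge n-1$ for $n\gg 0$ (the minimum being realized by the standard representation $V((n-1,1))_n$). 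In particular the sign representation does not occur for $n\gg 0$, so for such $n$ every non-trivial irreducible constituent of $\RR V_n$ has dimension at least $n-1$.

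Now the key step is immediate. Let $W_n:=\ker(P_n-\lambda(n)I)$, a nonzero $\Sn_n$-subrepresentation of $\RR V_n$. Because $\lambda(n)\ne 1$, $W_n$ is orthogonal to $\RR\mathbf 1$, which carries the unique trivial subrepresentation of $\RR V_n$; hence $W_n$ contains no copy of the trivial representation, so for $n\ge N$ every constituent of $W_n$ is some $V(\lambda)_n$ with $\lambda\ne\emptyset$. As $W_n\ne 0$, this forces
\[
\dim W_n\ \ge\ n-1 \qquad\text{for } n\gg 0.
\]
But $\dim W_n$ is the multiplicity of $\lambda(n)$, which by Theorem \ref{eigenvaluestab}(3) agrees with a polynomial for $n\gg 0$; a polynomial that is eventually at least $n-1$ is non-constant, which is the desired conclusion.

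The step I expect to demand the most care is the bookkeeping ensuring that ``$\lambda(n)\ne 1$'' does the work one wants: one needs that the eigenvalue $1$ is simple and strictly dominant (connectedness and Proposition \ref{notbipartite}), that the trivial representation sits inside the $1$-eigenspace and occurs with multiplicity exactly $1$ (transitivity), and that for $n\gg 0$ there are no ``small'' non-trivial constituents of $\RR V_n$ --- no sign representation --- so that ``non-trivial constituent'' genuinely forces dimension growing with $n$ (representation stability, \cite{CEF}). Granting these three points, the rest is Schur's lemma and Theorem \ref{eigenvaluestab}.
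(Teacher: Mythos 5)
Your proposal is correct and follows essentially the same route as the paper's proof: the eigenspace of $\lambda(n)$ is an $\Sn_n$-subrepresentation containing no trivial constituent (since transitivity puts the unique trivial summand in the $1$-eigenspace), representation stability rules out other bounded-dimension constituents such as the sign representation, and polynomiality of the multiplicity comes from \cite{RSW}. You simply spell out the quantitative lower bound $\dim W_n \ge n-1$ that the paper leaves implicit.
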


\begin{proof}
The main theorem of \cite{RSW} implies that the multiplicity of $\lambda(n)$ eventually agrees with a polynomial in $n$. It therefore remains to argue that this polynomial is non-constant. Because the eigenspaces of $P_n$ carry an action by the symmetric group, we know that the irreducible constituents that appear must obey the restrictions imposed by representation stability. In particular, the only way that the dimension of this eigenspace is constant is if it decomposed into a sum of trivial representations. Because our symmetric group action is transitive, and because the consequently unique copy of the trivial representation is being occupied by the eigenspace for the eigenvalue 1, it follows that the dimension must be growing.
\end{proof}

We next turn our attention to the orbit graphs associated to a transitive $\FI$-graph. 

\begin{definition}
Let $G_\dt$ denote a transitive $\FI$-graph, and for some $m \gg 0$, fix a vertex $x \in G_m$. for each $n \geq m$, we write $x(n) \in V(G_n)$ to denote the image of $x$ under the map induced by the standard inclusion $\iota: [m] \hookrightarrow [n]$. Then the \textbf{$x$-roofed orbit graph $G_n^x$} associated to $G_n$ is the graph whose vertices are indexed by $\Sn_n$-orbits of pairs of vertices of the form $[y, x(n)]$. Two orbits $[y,x(n)],$ and $[z,x(n)]$ are connected by an edge if and only if there exists $(z',x(n)) \in [z,x(n)]$ such that $z'$ is adjacent to $y$. Note that, from the discussions of Section \ref{FIsection}, the graph $G_n^x$ is eventually independent of $n$. We define a random walk on $G_n^x$ by the transition rule
\[
P^x_n([y,x(n)],[z,x(n)]) = \sum_{[z',x(n)] =[z,x(n)]} P_n(y,z'),
\]
where $P_n(y',z)$ is the transition rule for the simple random walk on $G_n$. Hence-forth we will refer to this random process as the \textbf{orbit walk of $G_\dt$}.
\end{definition}

An alternative description of the orbit walk of $G_\dt$ is given in terms of the decomposition of the vertex set given by Theorem \ref{mainstructurethm}. In particular, the elements of $V(G_n)$ may be written as equivalence classes of permutations $[\sigma] \in \Sn_n / H \times \Sn_{n-m}$, where $H \leq \Sn_m$ and $m \geq 0$. It is easily seen that the classes $[\sigma]$ are in bijection with ordered tuples $(S_{\alpha_1},\ldots,S_{\alpha_r})$, where the $\alpha_i$ are the $H$-orbits of $[m]$, and the $S_{\alpha_i} \subseteq [n]$ are disjoint with $\sum_i |S_i| = m$. Indeed, for an $H$-orbit $\alpha_i$, one has $S_{\alpha_i} = \sigma(\alpha_i)$. We call the $S_{\alpha_i}$ the \textbf{labels} of the associated vertex. With regards to this description, an orbit of pairs of two vertices $[x,y]$ can then be described by indicating how much overlap exists in the labels of $x$ and $y$, respectively.

In particular, having fixed our vertex $x$ as in the definition of the orbit graph, the vertices of $G_n^x$ can be thought of as indicating how different the labels of the corresponding vertex are from those of $x(n)$.

We now take the time to record the following important proposition.

\begin{proposition}\label{isRatTran}
Let $G_\dt$ denote a transitive $\FI$-graph, and let $m \gg 0$ be so large that $G_n^x$ is unchanging for all $n \geq m$. Then the family of Markov chains $\{(X_t^{(n)},P^x_n)\}_{n \geq m}$ is a rational transition between $P^x_m$ and a Markov chain whose stationary distribution is given by 
\[
\pi_{\infty}([y,x(n)]) = \begin{cases} 0 &\text{ if $y$ and $x(n)$ have any overlap in their labels}\\ 1 &\text{ if $y$ and $x(n)$ have totally disjoint labels.}\end{cases}
\]
\end{proposition}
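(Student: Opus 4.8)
The plan is to establish the two requirements of a rational transition directly from the combinatorial description of the orbit walk. For the first requirement, I would observe that the transition matrix $P^x_n$ is obtained by summing the entries of the simple-random-walk transition matrix $P_n$ over $\Sn_n$-orbits. By Theorem \ref{eigenvaluestab} (and the underlying results of \cite{RSW} on $\FI$-set relations), the number of orbits of pairs of vertices is eventually constant in $n$, and more importantly the structural decomposition of Theorem \ref{mainstructurethm} expresses each vertex by its tuple of labels $(S_{\alpha_1},\dots,S_{\alpha_r})$. An orbit $[y,x(n)]$ is then recorded purely by the pattern of overlaps between the labels of $y$ and the (fixed) labels of $x(n)$, a datum independent of $n$ for $n \geq m$. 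I would then carry out the routine count: the number of neighbors $z'$ of a fixed $y$ lying in a given orbit $[z,x(n)]$, as well as the total degree of $y$ in $G_n$, are each given by polynomial (indeed binomial-coefficient) expressions in $n$ determined by the overlap data. Hence $P^x_n([y,x(n)],[z,x(n)])$ is a ratio of two such polynomials, i.e.\ an element of $\R(n)$, and setting $n = m$ recovers $P^x_m$ by construction. (One should note $P^x_n$ is a Markov chain for each $n \geq m$ because it is the push-forward of $P_n$ along the surjection $V(G_n) \to V(G_n^x)$.)

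For the second requirement, I would analyze $\lim_{n\to\infty} P^x_n$ entrywise using those same rational expressions, and argue that in the limit essentially all of the probability mass flows toward orbits whose labels are disjoint from those of $x(n)$. The heuristic is that the simple random walk on $G_n$ takes a uniformly random neighbor; since the labels occupy only a bounded number $m$ of the $n$ available "slots," a uniformly chosen neighbor has labels disjoint from the fixed labels of $x(n)$ with probability tending to $1$ as $n\to\infty$. So in the limiting chain $Q := \lim_n P^x_n$, from any state the walk moves in one step (with probability $1$) to the unique orbit $[y,x(n)]$ with $y$ and $x(n)$ having totally disjoint labels; that orbit is absorbing. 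The stationary distribution of $Q$ is therefore the point mass on that absorbing orbit, which is exactly the claimed $\pi_\infty$.

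The main technical point I would need to be careful with is that the limit $Q = \lim_n P^x_n$ actually exists as a genuine (stochastic) matrix and that $\pi_\infty$ is genuinely stationary for it — in particular that $Q$ is a well-defined Markov chain even though its limiting behavior is degenerate (absorbing), unlike the chains $P^x_n$ for finite $n$. Existence of the limit is immediate once each entry is a rational function of $n$ that is bounded on $[0,1]$ for large $n$, so the limit is just the ratio of leading coefficients (or $0$, or the appropriate finite value). Checking $\pi_\infty Q = \pi_\infty$ reduces to checking that the "disjoint-labels" orbit is fixed by $Q$, i.e.\ that the diagonal entry of $Q$ at that orbit equals $1$; this follows from the degree count above, since from a disjoint configuration a uniformly random neighbor still almost surely avoids the bounded label set of $x(n)$.

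The step I expect to be the main obstacle is the careful bookkeeping in the entrywise limit: one must verify that no positive fraction of the mass can "stick" on an orbit with nonempty overlap — that is, that every transition probability $P^x_n([y,x(n)],[z,x(n)])$ with $z$ overlapping $x(n)$ really is $o(1)$, uniformly over the finitely many such orbits. This requires knowing the precise degree (in $n$) of the numerator versus the denominator polynomial for each overlap type, which in turn rests on the combinatorial claim that moving a label into or keeping it inside the bounded set $x(n)\text{'s labels}$ costs a factor of $n$ in probability relative to moving it into the "generic" region. Making that uniform-in-orbit estimate precise, using the finiteness of the orbit set guaranteed by \cite{RSW}, is where the real work lies.
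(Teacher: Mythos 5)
Your first half is fine: the entrywise rationality of $P^x_n$, obtained by counting the neighbors of a representative $y$ lying in each orbit and dividing by its degree, is exactly the content of the result the paper cites from \cite{RW2}, and your direct polynomial count is a legitimate, more self-contained substitute. The genuine problem is in the second half. The claim you single out as the crux --- that $P^x_n([y,x(n)],[z,x(n)]) = o(1)$ for every target orbit $z$ overlapping $x(n)$, so that in the limit every state jumps to the disjoint orbit in one step --- is false, and the paper's own Example \ref{ex:mult} refutes it. There a triple is adjacent to the roughly $n$ triples obtained by replacing its first entry and to the roughly $n^2$ triples obtained by replacing its second and third entries; so from a state whose first label lies among the labels of $x(n)$, a uniformly random neighbor retains that overlap with probability $1-O(1/n)$. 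The corresponding overlapping orbit is therefore itself absorbing in the limit chain $Q$: the limit is reducible with several absorbing states, and the inference ``the stationary distribution of $Q$ is therefore the point mass on the disjoint orbit'' does not follow. Indeed, the whole point of such examples in Section \ref{examples} is that escaping the overlap can take time $\Theta(n)$ rather than $O(1)$, which is exactly what a uniform $o(1)$ bound on overlapping transitions would forbid.

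The proposition survives because it only needs $\pi_\infty$ to be \emph{a} stationary distribution of the limiting chain, and for that your separate, correct observation suffices: from a fully disjoint configuration every freshly introduced label is uniform over the unused symbols and so avoids the $O(1)$ labels of $x(n)$ with probability $1-O(1/n)$, making the disjoint orbit absorbing for $Q$. Note that the paper reaches the same conclusion by a different and shorter route: by transitivity, $\pi^{(n)}_x$ is the push-forward of the uniform distribution on $V(G_n)$ (as computed in the proof of Theorem \ref{TransitiveMixing}), and the proportion of vertices sharing no label with $x(n)$ tends to $1$, so $\pi^{(n)}_x \to \pi_\infty$ with no analysis of the limiting transition matrix at all. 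Either repair works, but you should delete the uniform $o(1)$ claim rather than present it as the remaining work to be done --- it is not merely hard, it is wrong.
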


\begin{proof}
The first claim will follow once we know that $P^x_n \in \R(n)$. This was proven in \cite{RW2}. For the second claim, we note that the proportion of vertices which share no labels with $x(n)$ is approaching 1 as $n \to \infty$.
\end{proof}

\section{The proof of the main theorem}

We begin by making explicit the relationship between the mixing times of a model of a random walk on a transitive $\FI$-graph, and the mixing times of the associated orbit graph.

\begin{theorem}\label{TransitiveMixing}
Let $G_\dt$ denote a transitive $\FI$-graph, and let $m \gg 0$ be so large that $G_n^x$ is unchanging for all $n \geq m$. If we write $\tme^{(n)}$ for the mixing time for the simple random walk on $G_n$, and $\tme^{(n)}_x$ for the mixing time for the orbit walk, then $\tme^{(n)}_x = \tme^{(n)}$.
\end{theorem}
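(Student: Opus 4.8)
The plan is to show that the orbit walk on $G_n^x$ is precisely the ``projection'' of the simple random walk on $G_n$ onto a lumping of its state space, and that this lumping is fine enough to preserve total variation distance to stationarity. First I would recall the \emph{lumping} (or \emph{projection}) formalism for Markov chains: given the $\Sn_n$-action on $V(G_n)$ that fixes the vertex $x(n)$ up to its stabilizer, the orbits of pairs $[y,x(n)]$ partition $V(G_n)$ into blocks, and the defining transition rule $P^x_n([y,x(n)],[z,x(n)]) = \sum_{[z',x(n)]=[z,x(n)]} P_n(y,z')$ is exactly the statement that the partition is \emph{lumpable} for $P_n$ --- the sum on the right is independent of the representative $y$ chosen within its block precisely because the stabilizer of $x(n)$ acts on $V(G_n)$ commuting with $P_n$ and acts transitively on each block. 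So the orbit walk is the lumped chain.

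Next I would use the standard fact that for a lumpable chain, if the initial distribution is constant on each block, then the distribution of the lumped chain at time $t$ is the pushforward of the distribution of the original chain at time $t$, and the total variation distance is preserved under this pushforward \emph{as long as} the stationary distribution $\pi_n$ of $G_n$ is also constant on each block. Here $\pi_n$ is uniform on $V(G_n)$ (by transitivity, using the remark after the structure theorem), hence certainly constant on blocks, and its pushforward is exactly $\pi^x_n$, the stationary distribution of the orbit walk. The key point is that the worst-case starting state for the simple random walk on $G_n$ can, by transitivity and the group action, be taken to be any fixed vertex --- in particular $x(n)$ itself --- and starting from $x(n)$ is a point mass that is (trivially) constant on its block $\{x(n)\}$; its pushforward is the point mass at the vertex $[x(n),x(n)]$ of $G_n^x$, which is the natural worst-case start for the orbit walk. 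Thus $\|P_n^t(x(n),\cdot) - \pi_n\|_{TV} = \|(P^x_n)^t([x(n),x(n)],\cdot) - \pi^x_n\|_{TV}$, and since by vertex-transitivity the total variation distance from $\pi_n$ is the same for every starting vertex of $G_n$, both $\tme^{(n)}$ and $\tme_x^{(n)}$ are governed by this same quantity, giving $\tme^{(n)}_x = \tme^{(n)}$.

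To make the argument airtight I would spell out the two inequalities separately. For $\tme^{(n)}_x \le \tme^{(n)}$: any starting block of $G_n^x$ lifts to a (uniform-on-the-block) distribution on $G_n$, whose TV distance to $\pi_n$ after $t$ steps upper-bounds the orbit-walk TV distance after $t$ steps by the data-processing inequality for the pushforward; and the uniform-on-a-block distribution is a mixture of point masses, each of which by transitivity has the same TV profile as the point mass at $x(n)$, so the mixture's TV distance is at most $\max_x \|P_n^t(x,\cdot) - \pi_n\|_{TV}$. For $\tme^{(n)} \le \tme^{(n)}_x$: picking the worst starting vertex $v$ of $G_n$, apply a group element moving $v$ to $x(n)$ (legitimate since $P_n$ and $\pi_n$ are $\Sn_n$-invariant), so the worst-case TV distance equals the one from $x(n)$, which equals the orbit-walk TV distance from $[x(n),x(n)]$, which is at most the worst-case orbit-walk TV distance. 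I expect the main obstacle to be the careful bookkeeping in the ``$\le$'' direction when the lifted distribution is uniform on a block rather than a point mass --- one must check that averaging point masses over a block does not \emph{increase} the TV distance to stationarity beyond the single-vertex value, which follows from convexity of TV distance in the starting distribution together with vertex-transitivity, but deserves an explicit sentence. The lumpability verification and the pushforward-preserves-TV lemma are routine.
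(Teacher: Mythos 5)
Your proposal is correct and is essentially the paper's proof in different language: the paper establishes the same total-variation equality by the explicit computation $(P^x_n)^t([x(n),x(n)],[y,x(n)]) = |[y,x(n)]|\cdot P_n^t(x(n),y)$ and $\pi^{(n)}_x([y,x(n)]) = |[y,x(n)]|\cdot\pi^{(n)}(y)$, then uses vertex-transitivity to reduce the worst-case start to $x(n)$ and the projection (data-processing) inequality for the reverse direction, exactly as you do. Your lumpability/pushforward phrasing and the explicit convexity argument for the $\tme^{(n)}_x \le \tme^{(n)}$ direction are, if anything, slightly more careful than the paper's appeal to ``well known facts about projection chains.''
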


\begin{proof}
Write $\pi^{(n)}$ and $\pi^{(n)}_x$ for the stationary distributions of the simple random walk on $G_n$ and the orbit walk, respectively.

We first note, for any $y \in V_n$,
\[
\pi^{(n)}_x([y,x(n)]) = \sum_{[y',x(n)]=[y,x(n)]} \pi^{(n)}(y') = |[y,x(n)]|\cdot\pi^{(n)}(y),
\]
as the stationary distribution is uniform for transitive chains. On the other hand, for any $y \in V_n$,
\[
(P^x_n)^t([x(n),x(n)],[y,x(n)]) = \sum_{[y',x(n)] = [y,x(n)]} P_n^t(x(n),y') = |[y,x(n)]|\cdot P_n^t(x(n),y),
\]
as the condition that $[y,x(n)] = [y',x(n)]$ implies that there is a permutation $\sigma$ that sends $y$ to $y'$ while fixing $x(n)$ and therefore $ P_n^t(x(n),y') = P_n^t(\sigma x(n),\sigma y') = P_n^t(x(n),y)$. Putting these two together we obtain,

\begin{align*}
\sum_{[y,x(n)]} |\pi_x^{(n)}([y,x(n)]) - (P^x_n)^t([x(n),x(n)],[y,x(n)])| &= \sum_{[y,x(n)]} \left( |[y,x(n)]| \cdot|\pi^{(n)}(y) - P_n^t(x(n),y)|\right)\\
																		&= \sum_y |\pi^{(n)}(y) - P_n^t(x(n),y)|
\end{align*}

Because the simple random walk on $G_\dt$ is transitive, its total variation distance to stationary can be calculated with respect to any starting position. It follows that the total variation distance between $P_n^t$ and $\pi^{(n)}$ is no larger than the distance between $(P_n^x)^t$ and $\pi^{(n)}_x$. In particular, $\tme^{(n)}_x \geq \tme^{(n)}$. On the other hand, the orbit chain is clearly a projection of the simple random walk, whence $\tme^{(n)}_x \leq \tme^{(n)}$ from well known facts about projection chains.
\end{proof}

We are now ready to prove Theorem \ref{mainthm}. We will then conclude the paper by providing a collection of examples of transitive FI-graphs.

\begin{proof}[Proof of Theorem \ref{mainthm}]
For the remainder of this proof, fix a transitive $\FI$-graph $G_\dt$, as well as the associated orbit graph $G_\dt^x$. We will show that the family of simple random walks on $G_\dt$ do not satisfy the product condition.

Theorem \ref{TransitiveMixing} tells us that the mixing time of the simple random walk on $G_n$ $\tm^{(n)}$ equals that of the associated orbit walk. It now follows from Proposition \ref{isRatTran} and Theorem \ref{RatMixing} that $\tm^{(n)}$ is $\Theta(f(n))$, where $f(n)$ is some rational function. In particular, the quotient $\frac{\tm^{(n)}}{t_{rel}^{(n)}}$ is $\Theta$ of a algebraic function according to Theorem \ref{eigenvaluestab}. On the other hand, adapting Theorem \ref{maininequal} to our setting we see that
\[
\frac{\tm^{(n)}}{t_{rel}^{(n)}} \leq \log(4|G_n|),
\]
where $|G_n|$ is the number of vertices in $G_n$. In particular, as the number of vertices of a finitely generated $\FI$-graph grows as a polynomial in $n$, we see that $\frac{\tm^{(n)}}{t_{rel}^{(n)}}$ is bounded from above by something that is $\Theta$ of $\log(n)$. We finish this proof by showing that this bound implies that $\frac{\tm^{(n)}}{t_{rel}^{(n)}}$ is not limiting to infinity, whence the product condition fails.

Write $A(n)$ for the algebraic function that describes the end behavior of $\frac{\tm^{(n)}}{t_{rel}^{(n)}}$, and assume for contradiction that $\lim_{n \to \infty} A(n) = \infty$. By definition, we can find polynomials $p_0(n),\ldots,p_r(n)$ such that
\[
p_r(n)A^r(n) + \ldots + p_1(n)A(n) + p_0(n) = 0.
\]
Let $p_j$ denote a polynomial with highest degree among the $p_i$, and assume that $j$ is the largest such index with this property. Then we have,
\[
p_r(n)A^r(n) + \ldots + p_1(n)A(n) + p_0(n) = p_j(n)A^j(n)\left(1 + \sum_{i \neq j} \frac{p_i}{p_j}A^{i-j}\right) = 0
\]
Taking the limit as $n \to \infty$,
\[
\lim_{n \to \infty}  p_j(n)A^j(n)\left(1 + \sum_{i \neq j} \frac{p_i(n)}{p_j(n)}A^{i-j}(n)\right) = \lim_{n \to \infty}  p_j(n)A^j(n)\left(1 + \sum_{k \neq j} \frac{p_k(n)}{p_j(n)}A^{k-j}(n)\right),
\]
where the latter sum is over all indices $k$ such that $\deg(p_k) = \deg(p_j)$. This follows from the fact that $A$ is $O(\log(n))$, whence it and all of its powers limit to zero when divided by $n$. By assumption we have that $k < j$ and therefore,
\[
0 = \lim_{n \to \infty}  p_j(n)A^j(n)\left(1 + \sum_{k \neq j} \frac{p_k(n)}{p_j(n)}A^{k-j}(n)\right) = \lim_{n \to \infty}  p_j(n)A^j(n) = \infty,
\]
where we have (twice) used the fact that $\lim_{n \to \infty}A(n) = \infty$. This is our desired contradiction.
\end{proof}

\section{Examples of random walks}\label{examples}

In this section, we give some examples of random walks on transitive $\FI$-graphs. In all cases, tuples contain elements of $\{1,2,\dots,n\}$ without repetition. 

\begin{example}
States are unordered pairs and a step is to replace both numbers with any others.
\end{example}

\begin{example}
States are triples and a step is to replace a random entry with any unused number.
\end{example}

\begin{example}
States are triples and a step is to either randomly permute the labels, or replace the last one with any unused number.
\end{example}

\begin{example}
States are $k$--tuples and a step is to shift all entries either one place to the left or one place to the right, deleting the entry on that end and introducing a new one at random at the other.
\end{example}

Walks such as these have constant mixing time, because what needs to happen for them to be mixed can be described without reference to $n$ --- the first mixes in a single step, the second after all three coordinates have been chosen at least once each, the third once each initial label has been moved into the last position and then replaced, and the last once either `left' or `right' has been chosen $k$ more times than the other. This non-dependence on $n$ should feel like a very $\FI$-flavored property, and it relies on the fact that any graph of this kind has a description in terms of tuples of labels where adjacency depends only on which labels are equal or unequal to which other labels, never on what those labels actually are (see the classification theorem of \cite{RSW} and further discussion in \cite{RW2}). That is, whenever a random walk step introduces a new label, it is only as ``a random new label'', never specifying which label to use.

It is also possible to produce walks with mixing time longer than constant, by introducing multiple orbits of edges.

\begin{example}
States are pairs and a step is to replace the first entry with probability $\frac{1}{n}$ or the second with probability $\frac{n-1}{n}$.
\end{example}

\begin{example}
\label{ex:mult}
States are triples and a step is to replace the first entry with probability $\frac{1}{n}$ or the second and third with probability $\frac{n-1}{n}$.
\end{example}

\begin{example}
States are triples and a step is to either randomly permute the labels with probability $\frac{n-1}{n}$, or replace the last one with any unused number, with probability $\frac{1}{n}$.
\end{example}

In these three examples, the necessary conditions for mixing are still phrased without any dependence on $n$ --- in all cases, they are that we must wait until all labels have been replaced --- but the time for this is now linear in $n$, and similar constructions could produce mixing times of any higher power of $n$. While this dependence on $n$ may appear artificial, it can at least arise naturally from the construction of a simple random walk --- for instance, Example \ref{ex:mult} is essentially a simple random walk on the graph where the triple $\{a,b,c\}$ is connected to each $\{x,b,c\}$ and each $\{a,y,z\}$ --- there are just about $n$ times as many edges of the second kind as of the first.

Indeed, by the classification theorem of \cite{RSW}, any simple random walk on a transitive $\FI$-graph is of the forms described here --- the state space is $k$--tuples, perhaps with some identification, and moves involve reordering entries of the tuple and/or replacing some with random other elements. Because we are working with simple random walks, any reordering move implies that the reverse move is also possible and equally likely.

\end{document}